\theoremstyle{plain}
\newtheorem{thm}{Theorem}[section]
\newtheorem{theorem}[thm]{Theorem}
\newtheorem{lemma}[thm]{Lemma}
\newtheorem{conjec}[thm]{Conjecture}
\newtheorem{prop}[thm]{Proposition}
\newtheorem{cor}[thm]{Corollary}
\theoremstyle{definition}
\newtheorem{defin}[thm]{Definition}
\newtheorem{rem}[thm]{Remark}
\newtheorem{remark}[thm]{Remark}
\newtheorem{ex}[thm]{Example}
\newcommand\theoref{Theorem~\ref}
\newcommand\propref{Proposition~\ref}
\newcommand\lemref{Lemma~\ref}
\newcommand\corref{Corollary~\ref}
\newcommand\defref{Definition~\ref}
\newcommand\exref{Example~\ref}
\def\secref{Section~\ref}
\def\CF{\mathrm{Conf}}
\def\B{\mathrm{Braid}}
\def\DF{{D}}
\def\P{\mathrm{P}}
\def\Sv{\protect{\mathfrak{genus\,}}}
\def\cat{\protect\operatorname{cat}}
\def\hdim{\protect\operatorname{hdim}}
\def\conn{\protect\operatorname{conn}}
\def\cl{\protect\operatorname{cl}}
\def\TC{\protect\operatorname{TC}}
\def\STC{\protect\operatorname{TC}^{\Sigma}}
\def\eps{\varepsilon}
\def\gf{\varphi}
\def\ga{\alpha}
\def\gb{\beta}
\def\gg{\gamma}
\def\gr{\rho}
\def\Si{\Sigma}
\def\Z{{\mathbb Z}}
\def\R{{\mathbb R}}
\def\cp{{\mathcal P}}
\def\empt{\varnothing}
\def\m{\medskip}
\def\ov{\overline}
\def\ts{\times}
\def\th{^\mathrm{th}}
\def\cupr{\smallsmile}
\begin{document}

\title{Higher topological complexity and its symmetrization}
\author[I. Basabe, J.~Gonz\'alez, Yu.~Rudyak, and D.~Tamaki]{Ibai Basabe, Jes\'us Gonz\'alez, Yuli B. Rudyak, and Dai Tamaki}

\address{Ibai Basabe \newline
Department of Mathematics, University of Florida \newline
358 Little Hall, Gainesville, FL 32611-8105, USA}
\email{iebasabe1@ufl.edu}

\address{Jes\'us Gonz\'alez \newline
Departamento de Matem\'aticas, CINVESTAV-IPN \newline
A.P.~14-740, M\'exico City 07000, M\'exico}
\email{jesus@math.cinvestav.mx}

\address{Yuli B. Rudyak \newline
Department of Mathematics, University of Florida \newline
358 Little Hall, Gainesville, FL 32611-8105, USA}
\email{rudyak@ufl.edu}

\address{Dai Tamaki \newline
Department of Mathematical Sciences, Shinshu University \newline
Matsumoto, 390-8621, Japan}
\email{rivulus@shinshu-u.ac.jp}

\begin{abstract}
We develop the properties of the $n$-th sequential topological complexity $\TC_n$, a homotopy invariant introduced by the third author as an extension of Farber's topological model for studying the complexity of motion planning algorithms in robotics. We exhibit close connections of $\TC_n(X)$ to the Lusternik-Schnirelmann category of cartesian powers of~$X$, to the cup-length of the diagonal embedding $X\hookrightarrow X^n$, and to the ratio between homotopy dimension and connectivity of $X$. We fully compute the numerical value of $\TC_n$ for products of spheres, closed 1-connected symplectic manifolds, and quaternionic projective spaces. Our study includes two symmetrized versions of $\TC_n(X)$.  The first one, unlike Farber-Grant's symmetric topological complexity, turns out to be a homotopy invariant of $X$; the second one is closely tied to the homotopical properties of the configuration space of cardinality-$n$ subsets of $X$. Special attention is given to the case of spheres.
\end{abstract}
\maketitle

\vspace{-3mm}
{\small {\it 2010 Mathematics Subject Classification:}
Primary 55M30. Secondary 55R80, 55R05, 57Q40, 68T40.

{\it Key words and phrases:} Lusternik-Schnirelmann category; Schwarz
genus; topological complexity; motion planning; configuration spaces.}

\tableofcontents

\section{Introduction, main results, and organization}\label{intro}

A {\it motion planning algorithm} (mpa) for an autonomous system (robot) $S$ is a rule assigning, to each pair $(A,B)$ of initial-final positions of $S$, a (continuous) motion from $A$ to $B$~\cite{L,LV}. If $X$ stands for the space of all possible states of $S$, and $P(X)$ is the space of all paths $\gamma:[0,1]\rightarrow X$, then a mpa for $S$ is a (non-necessarily continuous) section for the end-points evaluation map $e\colon P(X)\rightarrow X\times X$ defined as $e(\gamma)=(\gamma(0),\gamma(1))$. 

\m For practical applications one is interested in continuous mpa's. However it is easy to see that the end-points evaluation map $e$ admits a continuous section if and only if the space of states $X$ is contractible. The alternative is to look at the Schwarz genus of the map $e$, which leads to Farber's concept of topological complexity. This gives a way of recognizing mpa's with the least possible order of instability (see~\cite[Section~4]{Finst}). The recognition is done directly from the homotopical properties of the space of states of the robot.

\m{\bf Definition}~(Farber).
Given a path-connected topological space $X$, the {\it topological complexity} of $X$, $\TC(X)$, is the least positive integer $k$ such that the cartesian product $X\times X$ can be covered  by $k$ open subsets $U_1, U_2,\ldots, U_k$ on each of which $e$ admits a continuous section $s_i\colon U_i\to P(X)$. Each pair $(U_i,s_i)$ is called a {\it local motion planner} with domain $U_i$. We set $\TC(X)=\infty$ if no such $k$ exists.

\m
A symmetrized version of topological complexity arises when attention is restricted to local planners for which the motion from $A$ to $B$ is the reverse of the motion from $B$ to $A$,~\cite{FGsym}. A number of properties of topological complexity and symmetric topological complexity can be found in~\cite{F1,F2,F3,FGsym,FG,FY}. The papers~\cite{FTY,GL} identify these concepts in the case of real projective spaces as their immersion and embedding dimensions, respectively.

\m
This paper is concerned with the third author's generalization of the above concepts. In such a view, the motion planning does not only depend on a couple of initial-final states of a robot, but in a sequence of prescribed intermediate stages that the robot should reach through the motion. Such a setting is standard in industrial production processes in which the manufacture of a given good goes through a series of production steps. The corresponding need to identify best possible sequential motion planning algorithms leads to a homotopy invariant $\TC_n(X)$, the {\it $n$-th topological complexity} of $X$, introduced in \cite{R} and reviewed in \secref{s:prel} (where we use normalized notation, i.e.~ in such a way that contractible spaces have $\TC_n=0$.)

\m
In Section~\ref{s:prop} we discuss basic properties of $\TC_n$, including methods for calculating this homotopy invariant. In \theoref{t:clest} we describe optimal bounds for $\TC_n(X)$: lower bounds are given in terms of the cup-length of elements in the kernel of the iterated diagonal, whereas connectivity and homotopy dimension of~$X$ lead to upper bounds. The subadditivity of $\TC_n$ is settled in~\propref{p:product}. As an application, we obtain the full determination of the numerical value of $\TC_n(X)$ when $X$ is either a product of spheres (\corref{c:spheres}), a closed simply connected symplectic manifold (\corref{c:symplec}), or a quaternionic projective space (\corref{c:projective}).

\m
Many of our results generalize existing properties for Farber's $\TC$. For instance, in \corref{c:bounds} we show the following close connection between higher topological complexity and the Luster\-nik-Schnirelmann category of cartesian powers of spaces:

\m
{\bf Theorem. } {\it For a path-connected space $X$,}
$\cat(X^{n-1})\leq\TC_n(X)\leq\cat(X^n)$.

\m
\theoref{t:topologicalgroup} below gives $\TC_n(G)=\cat(G^{n-1})$ for a path-connected topological group $G$, which extends the $n=2$ property proved by Farber in \cite[Lemma~8.2]{Finst}. Lupton and Scherer have recently proved that  this property extends to not-necessarily homotopy-associative Hopf spaces (see~\cite{LSh}). 

\m
\secref{s:sym} deals with symmetric versions of higher topological complexity. We begin by introducing $\STC(X)$, a variation of the symmetric topological complexity $\TC^S(X)$ introduced in \cite{FGsym}. We prove that the numerical values of the two invariants differ at most by a unit (\propref{p:compare}). Such a fact should be prised by noticing that, although Farber and Grant observe that $\TC^S(X)$ is not a homotopy invariant, $\STC(X)$ depends only on the homotopy type of $X$. It should be noted that the homotopy invariance also fails in general for the {\it monoidal topological complexity} introduced by Iwase and  Sakai (see~\cite[Definition 1.3 and Remark~1.4]{IS}), where the stasis property is imposed on the motion planning problem, instead of the symmetry condition we impose on $\STC$. We construct the corresponding higher analogues $\TC^S_n$ and $\STC_n$, and prove the homotopy invariance of the latter (\propref{p:hinv}). 

\m
The calculation of $\TC_n^S$ can turn out to be an extremely difficult task, mainly due to what seems to be a limited current knowledge of precise homotopy information about braid spaces (even braid manifolds, for that matter). In \secref{s:bounding} we exhibit evidence leading to conjecture that
\begin{equation}\label{conejo}
\TC_n^S(S^k)\leq \left[(n+2)(k-1)+4\rule{0mm}{3.4mm}\right](n-1)/2k
\end{equation}
holds for integers $k\geq1$ and $n\geq2$. In particular, we observe in Corollary~\ref{laevi} that the equality $\TC^S_n(S^k)=2(n-1)$ holds provided $n=2$ or $k=1$.

\m
{\bf Acknowledgments.}
The first author wishes to thank Michael Farber, Jes\'us Gonz\'alez, Dirk Sch\"utz and the Mathematisches Forschungsinstitut Oberwolfach for organizing a wonderful Arbeitsgemeinschaft mit aktuellem Thema in Topological Robotics. The third author is grateful for the support during a visit at the Max-Planck Institute for Mathematics in Bonn, Germany. The fourth author would like to thank the Centro di Ricerca Matematica Ennio De Giorgi, Scuola Normale Superiore di Pisa, for supporting his participation in the research program ``Configuration Spaces:~Geometry, Combinatorics and Topology'', during which a part of his work on this paper was done. The second, third and fourth authors were partially supported, respectively, by Conacyt Research Grant 102783, Simons Foundation Grant 209424, and Grants-in-Aid for Scientific Research, Ministry of Education, Culture, Sports, Science
and Technology, Japan: 23540082. The authors wish to express their most sincere gratitude to Peter Landweber for valuable suggestions on earlier versions of this paper, and for pointing out an important extension of the authors' original evidence for the conjectural assertion in~(\ref{c:TCnSpherek}).

\section{Preliminaries on notation}\label{s:prel}
We use the normalized version of Schwarz's concept of the genus of a map~\cite{Sv}.

\begin{defin}\label{d:sgf}
The {\it Schwarz genus} (also known as {\it sectional category}) of a map $p:E\rightarrow B$ is the least number $k$ such that there is an open covering $U_0, U_1,\ldots,U_k$ of $B$ for which the restriction of $p$ to each $U_i$ ($i=0, 1,\ldots,k$) admits a homotopy section, i.e.~a Ê(continuous) Êmap Ê$s_i: ÊU_i Ê\to ÊE$ such Êthat Ê$ps_i$ Êis Êhomotopic Êto Êthe inclusion $U_i\hookrightarrow B$. We agree to set $\Sv(f)=-1$ for $f: X \to Y$ with $X=\varnothing=Y$.
\end{defin}

The following result, proved in~\cite[Proposition~22 on page 84]{Sv} (see also the comments in Section~1 on page 54 of~\cite{Sv}), will be used in the proof of \propref{p:product}.  
Here we agree that a normal space is, by definition, required to be Hausdorff. This convention will also be in force throughout Section~\ref{s:prop}.

\begin{prop}\label{p:prod}
Let $f\ts f': X\ts X'\to Y \ts Y'$ be the product of two maps $f: X\to Y$ and $f':X'\to Y'$. If $\,Y\times Y'$ is normal, then $\,\Sv(f\ts f')\le \Sv(f) + \Sv(f')$.
\end{prop}

\begin{defin}\label{d:tc1}
Let $X$ be a path-connected space. The {\it $n$-th topological complexity} of $X$, $\TC_n(X)$, is the Schwarz genus of the fibration
\begin{equation}\label{enX}
e_n^X=e_n:X^{J_n}\rightarrow X^n,\quad e_n(\gamma)=(\gamma({1_1}),\ldots ,\gamma({1_n}))
\end{equation}
where $J_n$ is the wedge of $n$ closed intervals $[0,1]$ (each with $0\in[0,1]$ as the base point), and $1_i$ stands for $1$ in the $i^{\mathrm{th}}$ interval. 
\end{defin}

Note that~(\ref{enX}) is the standard fibrational substitute for the iterated diagonal map $d_n=d_n^X\colon X \to X^n$, so $\mathrm{TC}_n(X)=\Sv(d_n^X)$. More generally, for a contractible Êspace $Y_n$ with~$n$ distinct distinguished points $v_1,\ldots, v_n\in Y_n$, consider the evaluation map $e_{Y_n}: X^{Y_n}\to X^n$, $e_{Y_n}(f)=(f(y_1),\ldots,f(y_n))$. Because of the contactibility of $Y_n$, the genus of $e_{Y_n}$ is equal to $\TC_n(X)$, the proof is just as the one in Ê\cite[Remark~3.2.5]{R}. In particular, we can take $Y_n$ to be a tree with $n$ leaves, or the unit interval $I_n$, say with distinguished points $v_i=(i-1)/(n-1)$, $i=1\ldots,n$. In the latter case we see that the $n$-th higher topological complexity gives a topological measure of the complexity of the motion planning problem where the robot is required to visit $n$ ordered prescribed stages. For this reason, we also refer to $\TC_n$ as the $n$-th sequential topological complexity. Farber's $\TC$ is $\TC_2+1$.

\m
Other fibrations (which not necessarily give fibrational substitutes of the iterated diagonal) can be used to define $\TC_n$. Indeed, let $G_n$ be any connected graph where $n$ ordered distinct vertices $v_1,\ldots,v_n$ have been selected. We assert that the evaluation map $e_{G_n}:X^{G_n}\to X^n$ at the chosen vertices has $\Sv(e_{G_n})=\TC_n(X)$. To see this, choose  maps $I_n\to G_n\to J_n$ preserving the selected vertices. For instance, the latter map can be taken so to collapse most of $G_n$ to the base point in $J_n$, except that the first half of each directed edge $(v_i,v)$ in $G_n$ is mapped linearly onto the directed edge $(1_i,0)$ in $J_n$ (in particular vertices $v_i$ are mapped to vertices~$1_i$). Since the induced maps $X^{J_n}\to X^{G_n}\to X^{I_n}$ are compatible with the three evaluation maps, we get $\Sv(e_{I_n})\leq\Sv(e_{G_n})\leq\Sv(e_{J_n})$. But, as explained in the paragraph above, the extremes in the previous chain of inequalities agree with $\TC_n(X)$.

\m
We close this section setting notation relevant to the construction (in~\secref{s:bounding}) of our two symmetric versions of higher topological complexity.

\m
The action of the symmetric group $\Sigma_n$ on $\{1_1,\ldots,1_n\}$ extends to one on $J_n$. This yields corresponding $\Sigma_n$-actions on $X^n$ and $X^{J_n}$ in such a way that~(\ref{enX}) is an equivariant map. The action is free on the configuration space $\CF_n(X)$ of $n$ ordered distinct points in $X$ and, consequently, on $e_n^{-1}(\CF_n(X))$. Thus, at the level of orbit spaces we get a fibration $$\varepsilon_n^X=\varepsilon_n\colon Y_n(X)\to\B_n(X)$$ where $Y_n(X)=e_n^{-1}(\CF_n(X))/\Sigma_n$ and $\B_n(X)=\CF_n(X)/\Sigma_n$---the latter being the usual ``braid'' configuration space of cardinality-$n$ subsets of $X$. 

\m
We think of $\Sv(\varepsilon_n^X)$ as giving a measure for the topological complexity of the {\it $n$-th ubiquitous} motion planning problem on $X$. This concept serves in \secref{s:sym} as the building block relating our two symmetrized forms of $\TC_n$, see Theorem~\ref{generalinequality} and Definition~\ref{d:SStcn}.  \secref{s:bounding} will be devoted to exploring $\Sv(\varepsilon_n^{{S^k}})$.

\m
Note that the commutative diagram (where horizontal arrows are canonical projections)
\begin{equation}\label{lases}
\CD
e_n^{-1}(\CF_n(X)) @>>> Y_n(X)\\
@Ve_nVV @VV\varepsilon_nV\\
\CF_n(X) @>>> \B_n(X)
\endCD
\end{equation}
is a pull-back square, so that (local) sections of $\varepsilon_n$ correspond to $\Sigma_n$-equivariant (local) section of $e_n$. In particular, the homotopy fiber of $\varepsilon_n$ is $(\Omega X)^{n-1}$, just as for $e_n$ (\cite[Remark 3.2.3]{R}). For instance, a copy of $(\Omega X)^{n-1}$ sits inside the fiber of $e_{n}$ over an $n$-tuple $(x_1,x_2,\ldots,x_n)$ as the strong deformation retract consisting of {\it multipaths} $\{\gamma_j\}_{j=1}^n$ for which $\gamma_1$ is the constant path at $x_1$. Here and below the term ``multipath'' refers to an element $\gamma\in X^{J_n}$, and we will use the notation $\gamma=\{\gamma_j\}_{j=1}^{n}$ where $\gamma_j$ is the restriction of $\gamma$ to the $j$-th wedge summand of $J_n$.

\section{Properties of higher topological complexity}\label{s:prop}

The higher topological complexities of a space $X$ {are} closely related to the category of cartesian powers of $X$. The first indication of such a property comes from the inequality
\begin{equation}\label{eq:tccat}
\TC_n(X)\le \cat(X^n)
\end{equation}
which is an immediate consequence of the well known fact that the Schwarz genus of a fibration does not exceed the category of the base space. On the other hand, the inequality $\cat(X)\le\TC_2(X)$ is well known, and can be generalized to:

\begin{prop}\label{p:lowerbound}
For any path-connected space $X$,
$$
\cat(X^{n-1})\le \mathrm{TC}_n(X).
$$
\end{prop}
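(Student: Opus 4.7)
The plan is to use \propref{p:tc2}, which identifies $\TC_n(X)$ with the Schwarz genus of the endpoint fibration $e_n\colon X^{J_n}\to X^n$. Setting $k=\TC_n(X)$, I would pick an open cover $U_0,\ldots,U_k$ of $X^n$ together with continuous local sections $s_i\colon U_i\to X^{J_n}$ of $e_n$. The strategy is to slice $X^n$ by fixing a basepoint in the first coordinate, producing an open cover of $X^{n-1}$ of the same cardinality, and then to use each $s_i$ to exhibit the corresponding slice as contractible inside $X^{n-1}$. Under the normalization recalled in \remref{normalizar}, this yields $\cat(X^{n-1})\le k=\TC_n(X)$.

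More concretely, fix $x_0\in X$ and set $V_i:=U_i\cap(\{x_0\}\times X^{n-1})$, viewed as an open subset of $X^{n-1}$ via the obvious homeomorphism $\{x_0\}\times X^{n-1}\cong X^{n-1}$. Since the $U_i$'s cover $X^n$, they cover this slice, so the $V_i$'s cover $X^{n-1}$. For a point $(x_0,x_2,\ldots,x_n)\in V_i$, the section $s_i$ delivers a wedge of paths $(\gamma_1,\gamma_2,\ldots,\gamma_n)$ emanating from a common point, with $\gamma_1$ ending at $x_0$ and $\gamma_j$ ending at $x_j$ for $j\ge 2$. Concatenating the reverse of $\gamma_1$ with $\gamma_j$ gives a path $\delta_j$ from $x_0$ to $x_j$ depending continuously on the input, and the formula
$$
H_i\bigl((x_0,x_2,\ldots,x_n),t\bigr)=\bigl(\delta_2(t),\ldots,\delta_n(t)\bigr)
$$
then defines a continuous homotopy $H_i\colon V_i\times[0,1]\to X^{n-1}$ from the constant map with value $(x_0,\ldots,x_0)$ at $t=0$ to the inclusion $V_i\hookrightarrow X^{n-1}$ at $t=1$. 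Hence each $V_i$ is contractible in $X^{n-1}$ and $\{V_0,\ldots,V_k\}$ is the sought categorical cover.

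There is no serious obstacle here; the only points requiring a moment's care are the continuity of path reversal and concatenation with respect to the compact-open topology (which is classical) and the routine bookkeeping of normalizations between Schwarz genus and Lusternik--Schnirelmann category.
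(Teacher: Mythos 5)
Your proof is correct and follows essentially the same route as the paper: slice $X^n$ by fixing a point in the first coordinate, restrict the local sections of $e_n$ to the slice, and concatenate each $\gamma_j$ with (the reverse of) $\gamma_1$ to contract the slice inside $X^{n-1}$. The only cosmetic difference is the direction in which you parameterize the contracting paths and the labelling of the fixed point, which does not affect the argument.
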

\begin{proof}
Let $\TC_n(X)=k$ and choose a covering $B_0\cup B_1\cup \cdots \cup
B_k=X^n$ such that there is a continuous section $s_i$ for $e_n^{X}$ over $B_i$ for $i=0,\ldots,k$. Let $p:X^n\rightarrow X$ be the projection onto the first factor, choose $x_1\in X$, and put $A_i=p^{-1}(x_1)\cap B_i$. Note that $\{A_i\}_{i=0}^k$ is an open cover for $p^{-1}(x_1)$. Since {$p^{-1}(x_1)$} is homeomorphic to $X^{n-1}$, it suffices to show that each $A_i$ is contractible within $p^{-1}(x_1)$.

\smallskip
For a point $(x_1, x_2, \ldots, x_n)\in A_i$ consider the $n$ paths $\gamma_1,\ldots,\gamma_n$ making up the multipath $s_i(x_1,x_2,\ldots,x_n)=\{\gamma_j\}_{j=1}^{n}$. Then $\gamma_j(1)=x_j$ and $\gamma_j(0)=x_0$ for some $x_0\in X$ {which is independent of $j\in\{1,\ldots n\}$.} Then, the constant path $\delta_1$ at $x_1$, and the paths $\delta_j$ ($j=2,\ldots,n$)---formed by using {the time reversed path} $\gamma_j^{-1}$ the first half of the time, and $\gamma_1$ the second {half---are} the components of a path $\delta=(\delta_1,\ldots,\delta_n)$ in $p^{-1}(x_1)$ from $\delta(0)=(x_1,x_2,\ldots, x_n)$ to $\delta(1)=(x_1,x_1,\ldots,x_1)$. The continuity of $s_i$ implies that $\delta$ depends continuously on $(x_1,x_2,\ldots, x_n)$, {so we have constructed} a contraction of $A_i$ to $(x_1,x_1,\ldots,x_1)$ in $p^{-1}(x_1)$. Thus,
$\cat(X^{n-1})\le\TC_n(X)$.
\end{proof}

\begin{rem}
Using the fact that $\cat(X^n)\geq n$ {if} $X$ is not contractible
(\cite[Theorem 1.47]{CLOT}), we see that \propref{p:lowerbound} recovers \cite[Proposition 3.5]{R}.
\end{rem}

\propref{p:lowerbound} and \eqref{eq:tccat} yield:
\begin{cor}\label{c:bounds}
For any path-connected space $X$,
$$
\cat(X^{n-1})\le\TC_n(X)\le \cat(X^n).
$$
\end{cor}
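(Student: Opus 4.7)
The plan is immediate: this corollary is simply the conjunction of two facts already established in the section, so the proof amounts to citing them and noting that they pinch $\TC_n(X)$ between the two stated quantities.

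First, I would invoke inequality \eqref{eq:tccat}, namely $\TC_n(X)\le\cat(X^n)$. This is the upper bound, and it was derived earlier as an immediate consequence of the general principle (\cite[Proposition 9.14]{CLOT}) that the Schwarz genus of a fibration is bounded above by the Lusternik-Schnirelmann category of its base. Applied to the fibration $e_n\colon X^{J_n}\to X^n$ whose Schwarz genus equals $\TC_n(X)$ by \defref{d:tc1}, this gives $\TC_n(X)\le\cat(X^n)$.

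Second, I would invoke \propref{p:lowerbound}, which directly supplies the lower bound $\cat(X^{n-1})\le\TC_n(X)$. Stringing the two inequalities together yields
\[
\cat(X^{n-1})\le\TC_n(X)\le\cat(X^n),
\]
as desired.

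Since both ingredients are in hand, there is no real obstacle; the content of the corollary is purely bookkeeping. The only conceptual point worth emphasizing, which I would flag in a remark if appropriate, is that the two bounds differ by essentially one factor of $\cat(X)$ (via the product inequality $\cat(X^n)\le n\cdot\cat(X)$), so the corollary sharply constrains $\TC_n(X)$ for well-understood spaces $X$ and thereby sets the stage for the explicit computations (products of spheres, symplectic manifolds, quaternionic projective spaces) carried out in the subsequent corollaries.
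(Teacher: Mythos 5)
Your proof is correct and is identical to the paper's: both simply combine \eqref{eq:tccat} for the upper bound with \propref{p:lowerbound} for the lower bound.
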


We next show that the lower bound in Corollary~\ref{c:bounds} is optimal for topological groups.

\begin{prop}\label{p:topologicalgroup}
For any path-connected topological group $G$,
$$
\mathrm{TC}_n(G)\le  \cat(G^{n-1}).
$$
\end{prop}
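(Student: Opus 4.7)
The natural strategy is to exploit the group structure of $G$ to convert a categorical cover of $G^{n-1}$ into a genus cover of the diagonal map $d_n\colon G\to G^n$ (equivalently, of its fibrational substitute $e_n$, by \propref{p:tc2}). The underlying geometric fact is that $G^n$ is homeomorphic to $G\times G^{n-1}$ via the shear map
\begin{equation*}
\phi\colon G^n\to G\times G^{n-1},\qquad \phi(g_1,\ldots,g_n)=\bigl(g_1,\,g_1^{-1}g_2,\,\ldots,\,g_1^{-1}g_n\bigr),
\end{equation*}
so a contraction of an open piece of $G^{n-1}$ to the neutral element $(e,\ldots,e)$ can be transported back to an appropriate open piece of $G^n$ by left-translating by $g_1$.

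First I would set $k=\cat(G^{n-1})$ and choose an open cover $V_0,V_1,\ldots,V_k$ of $G^{n-1}$ together with, for each $i$, a homotopy $H_i\colon V_i\times[0,1]\to G^{n-1}$ from the inclusion $V_i\hookrightarrow G^{n-1}$ to the constant map with value $(e,\ldots,e)$. Define $U_i=\phi^{-1}(G\times V_i)\subset G^n$; that is,
\begin{equation*}
U_i=\bigl\{(g_1,\ldots,g_n)\in G^n \,:\, (g_1^{-1}g_2,\ldots,g_1^{-1}g_n)\in V_i\bigr\}.
\end{equation*}
Since the $V_i$ cover $G^{n-1}$, the $U_i$ form an open cover of $G^n$.

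Next I would construct a continuous section $s_i\colon U_i\to G^{J_n}$ of $e_n$ over $U_i$. For $(g_1,\ldots,g_n)\in U_i$, write
\begin{equation*}
H_i\bigl((g_1^{-1}g_2,\ldots,g_1^{-1}g_n),\,t\bigr)=\bigl(h_{i,2}(t),\,\ldots,\,h_{i,n}(t)\bigr),
\end{equation*}
so that $h_{i,j}(0)=g_1^{-1}g_j$ and $h_{i,j}(1)=e$. Define the multipath $s_i(g_1,\ldots,g_n)=\{\gamma_j\}_{j=1}^n$ by letting $\gamma_1$ be the constant path at $g_1$ and, for $2\le j\le n$,
\begin{equation*}
\gamma_j(t)=g_1\cdot h_{i,j}(1-t).
\end{equation*}
Each $\gamma_j$ starts at $g_1$ (so the $\gamma_j$ agree at the wedge point of $J_n$) and satisfies $\gamma_j(1)=g_1\cdot h_{i,j}(0)=g_j$, so $e_n\circ s_i=\id_{U_i}$. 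Continuity of $s_i$ follows from continuity of $H_i$ and of the group operations.

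Since the $k+1$ open sets $U_0,\ldots,U_k$ each admit a continuous section of $e_n$, this yields $\TC_n(G)\le k=\cat(G^{n-1})$, as desired. The argument is essentially bookkeeping once the shear homeomorphism $\phi$ is in place; the only mildly delicate point is to make sure the $n$ paths in the multipath agree at the wedge point $*$ of $J_n$, which is arranged by left-translating the contraction so that it emanates from the common point $g_1$. This generalizes Farber's $n=2$ argument for topological groups in a clean way.
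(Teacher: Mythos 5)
Your proof is correct and is essentially identical to the paper's: both define the open sets $U_i$ (the paper's $B_i$) via the shear $(g_1,\ldots,g_n)\mapsto(g_1^{-1}g_2,\ldots,g_1^{-1}g_n)$ and build the local section by left-translating a contraction of $G^{n-1}$ to the identity by $g_1$. The only thing the paper makes explicit that you leave tacit is that path-connectedness of $G$ is what lets you assume each categorical contraction ends at $(e,\ldots,e)$ rather than at an arbitrary point $p_i$.
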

\begin{proof}
Let $\epsilon$ denote the neutral element of $G$. Let $k=\cat(G^{n-1})$ and choose an open covering $A_0\cup\,\cdots\,\cup A_k=G^{n-1}$ where each $A_i$ {($i\in\{0,\ldots,k\}$)} contracts in $G^{n-1}$ to an $(n-1)$-tuple $p_i$. Since $G$ is path-connected, each contracting homotopy can be extended {as} to arrange that $p_i=(\epsilon,\ldots,\epsilon)=\epsilon^{(n-1)}$ for all $i=0,\ldots,k$. 

\smallskip
Then, for $i \in \{0, \ldots, k \}$ set
$$B_i=\{ (g, g{a}_2,\ldots,g{a}_n)\,|\,({a}_2,\ldots,{a}_n)
\in A_i, \,g\in G\},$$ which is open in $G^n$.
We assert that {$e_n^G$} admits a (continuous) section over each
$B_i$. Indeed, {for each $i$} the contractibility of $A_i$ in $G^{n-1}$ yields 
a path $\gamma_a$
in $G^{n-1}$ joining $\epsilon^{(n-1)}$ to each $a=(a_2, \ldots, a_n)\in
A_{i}\subset G^{n-1}$ and depending continuously on $a \in A_i$. Augment $\gamma_a$ to a path $\gamma_a'$ from $\epsilon^{(n)}$ to $(\epsilon, a_2, \ldots, a_n) \in B_i$ {with} the first coordinate remaining constant.  Then, for any $g\in G$, $g \gamma_a'$ is a path joining $(g,\ldots,g)=g\epsilon^{(n)}\in G^n$ to $(g, ga_2, \ldots, ga_n)\in B_i$ and depending continuously on $n$-tuples in $B_i$. Then, we get the required section  
$$
s_{i}: B_i\to G^{J_n}
$$
where, on the $j^{\mathrm{th}}$ interval of $J_n$, $s_i(g, ga_2,
\ldots, ga_n)$ is the $j^{\mathrm{th}}$ coordinate of $g\gamma_a'$.

\smallskip
The proof will be complete once we check that
$B_0\cup \,\cdots\, \cup B_k=G^n$. Take $(b_1,
\ldots, b_n)\in G^n$ and put $g=b_1$ and $a_i=g^{-1}b_i$. Then there
exists $j$ such that $(a_2, \ldots, a_n)\in A_j$. So, $(b_1,
\ldots, b_n)\in B_j$.
\end{proof}

\corref{c:bounds} and \propref{p:topologicalgroup} combined yield:
\begin{theorem}\label{t:topologicalgroup}
For any path-connected topological group $G$,
$$
\mathrm{TC}_n(G)= \cat(G^{n-1}).
$$
\end{theorem}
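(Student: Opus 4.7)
The statement is the conjunction of two inequalities in opposite directions, and both directions are already available in the preceding material, so the plan is essentially to assemble them. The lower bound $\cat(G^{n-1}) \le \TC_n(G)$ does not use any group structure; it is simply the left-hand inequality of \corref{c:bounds} applied to the path-connected space $X=G$. The upper bound $\TC_n(G) \le \cat(G^{n-1})$ is the content of \propref{p:topologicalgroup}, whose proof exploits the topological group structure (left translations) to promote a categorical open cover of $G^{n-1}$ to an open cover of $G^n$ on each piece of which $e_n^G$ admits a continuous section.

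Thus the argument I would write is a single sentence: combining \corref{c:bounds} with \propref{p:topologicalgroup} gives
\[
\cat(G^{n-1}) \le \TC_n(G) \le \cat(G^{n-1}),
\]
forcing equality. There is no genuine obstacle here, since all the real work has been done in proving the two bounds. The only thing worth emphasizing is the asymmetry between the two inputs: the lower bound is formal (valid for arbitrary $X$), whereas the upper bound specifically requires a continuous multiplication on $X$ to rewrite an arbitrary $n$-tuple $(b_1,\ldots,b_n)\in G^n$ as $(b_1,b_1a_2,\ldots,b_1a_n)$ with $(a_2,\ldots,a_n)\in G^{n-1}$, and to transport contractions by left translation.

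As a sanity check, for $n=2$ the equality reduces to $\TC_2(G)=\cat(G)$, i.e.~$\TC(G)=\cat(G)+1$ in Farber's normalization, which is precisely Farber's lemma \cite[Lemma~8.2]{Finst} cited in the introduction; so the theorem is the expected higher generalization and the proof pattern (formal lower bound plus translation-based upper bound) matches the classical case.
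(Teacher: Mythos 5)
Your proof is correct and matches the paper's argument exactly: the paper states \theoref{t:topologicalgroup} as the immediate combination of \corref{c:bounds} (giving $\cat(G^{n-1})\le\TC_n(G)$) and \propref{p:topologicalgroup} (giving $\TC_n(G)\le\cat(G^{n-1})$). Nothing further is needed.
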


Alternatively, we can look at the growth of $\TC_n$ in terms of the difference of any two consecutive values of $n$.

\begin{cor}\label{c:tgroupbounds}
Let $G$ be a path-connected topological group all of whose finite cartesian powers $G^k$ are normal\hspace{.7mm}\footnote{As noted in Section~\ref{s:prel}, we assume that a normal space is, by definition, Hausdorff. Thus, in view of the classical Birkhoff-Kakutani theorem, the normality hypothesis in \corref{c:tgroupbounds} holds when $G$ satisfies the first axiom of countability---i.e. provided $G$ is metrizable.}. Then for $n\ge3$,
$$
\TC_n(G)-\TC_{n-1}(G)\le \cat (G).
$$
\end{cor}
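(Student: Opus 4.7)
The plan is to combine Theorem~\ref{t:topologicalgroup} with the classical product inequality for Lusternik-Schnirelmann category, which is precisely what motivates the normality assumption on the cartesian powers $G^k$.

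First, I would apply Theorem~\ref{t:topologicalgroup} twice to rewrite both sides of the desired inequality purely in terms of category. Specifically, since $G$ is a path-connected topological group, Theorem~\ref{t:topologicalgroup} gives $\TC_n(G)=\cat(G^{n-1})$ and, applied with $n-1$ in place of $n$ (using that $n\geq 3$ so that $n-1\geq 2$), $\TC_{n-1}(G)=\cat(G^{n-2})$. Thus the desired inequality is equivalent to
\[
\cat(G^{n-1})-\cat(G^{n-2})\leq \cat(G).
\]

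Next, I would invoke the standard product inequality for category, $\cat(X\times Y)\leq \cat(X)+\cat(Y)$, which holds when $X\times Y$ is normal (see \cite[Theorem~1.37]{CLOT}); this is exactly where the normality hypothesis on the $G^k$ is used. Writing $G^{n-1}=G^{n-2}\times G$ and applying this inequality, I get $\cat(G^{n-1})\leq \cat(G^{n-2})+\cat(G)$, which rearranges to the desired bound.

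The proof is therefore essentially a one-line combination of Theorem~\ref{t:topologicalgroup} with the product inequality for category. There is no real obstacle; the only point to watch is ensuring that the hypothesis of the product inequality is met, which is guaranteed by assuming normality of all the finite cartesian powers $G^k$ (and in particular $G^{n-1}=G^{n-2}\times G$). The footnote in the statement clarifies the scope of this hypothesis via the Birkhoff-Kakutani theorem, so no further separation arguments are needed.
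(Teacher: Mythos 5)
Your proof is correct and is essentially the same as the paper's: both apply Theorem~\ref{t:topologicalgroup} at levels $n$ and $n-1$ and then invoke the product inequality for category on the decomposition $G^{n-1}=G^{n-2}\times G$, with normality of $G^{n-1}$ supplying the hypothesis. The only (cosmetic) difference is that you cite the product inequality for $\cat$ directly from \cite{CLOT}, whereas the paper derives it from Schwarz's product inequality for genus (\propref{p:prod}).
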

\begin{proof}
This is a consequence of \theoref{t:topologicalgroup} and the product inequality for the category---valid under the current normality assumptions in view of \propref{p:prod}.
\end{proof}

Unlike {with topological} groups, higher topological complexities of {an arbitrary} path-connected space $X$ do not {appear} to be completely determined by the category of cartesian powers of $X$. Nonetheless, we can directly obtain the following bound on the difference of {two} consecutive higher topological complexities of $X$.

\begin{prop}\label{c:difbounds}
Let $X$ be a path-connected space all of whose finite cartesian powers $X^k$ are normal. Then for $n\ge3$,
$$
\TC_n(X)-\TC_{n-1}(X)\le \cat (X^2).
$$
\end{prop}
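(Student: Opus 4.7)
The plan is to mimic the proof of \corref{c:tgroupbounds}, replacing the group-theoretic equality $\TC_n(G)=\cat(G^{n-1})$ used there by the weaker two-sided estimate of \corref{c:bounds}. Concretely, \corref{c:bounds} furnishes the upper bound $\TC_n(X)\le\cat(X^n)$ as well as the lower bound $\cat(X^{n-2})\le\TC_{n-1}(X)$, so it will suffice to bridge them by an inequality of the form $\cat(X^n)\le\cat(X^{n-2})+\cat(X^2)$.

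That bridging step is precisely the product inequality for Lusternik--Schnirel\-mann category applied to the decomposition $X^n=X^{n-2}\times X^2$. Since by hypothesis every finite cartesian power $X^k$ is normal, the space $X^n$ is normal, and the product inequality follows---exactly as invoked in the proof of \corref{c:tgroupbounds}---from \propref{p:prod}: realizing $\cat(Y)$ as the Schwarz genus of the based path-space fibration $P_*Y\to Y$, one takes the external product of the fibrations associated with $Y=X^{n-2}$ and $Y'=X^2$, and \propref{p:prod} yields $\cat(X^{n-2}\times X^2)\le\cat(X^{n-2})+\cat(X^2)$.

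Chaining the three inequalities gives
\[
\TC_n(X)\;\le\;\cat(X^n)\;\le\;\cat(X^{n-2})+\cat(X^2)\;\le\;\TC_{n-1}(X)+\cat(X^2),
\]
which rearranges to the desired estimate. I do not expect any substantive obstacle: the argument is a direct adaptation of the topological-group case, and the slackening from $\cat(G)$ in \corref{c:tgroupbounds} to $\cat(X^2)$ here is exactly the gap between the upper and lower bounds provided by \corref{c:bounds}.
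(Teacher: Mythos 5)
Your proof is correct and follows exactly the route the paper has in mind: the paper's own proof consists of the one-line instruction to repeat the argument of \corref{c:tgroupbounds} with \theoref{t:topologicalgroup} replaced by the two-sided estimates of \corref{c:bounds}, and unwinding that instruction gives precisely your chain $\TC_n(X)\le\cat(X^n)\le\cat(X^{n-2})+\cat(X^2)\le\TC_{n-1}(X)+\cat(X^2)$, using the decomposition $X^n=X^{n-2}\times X^2$ and the product inequality for category drawn from \propref{p:prod} under the normality hypothesis.
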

\begin{proof}
Use the argument in the proof of \corref{c:tgroupbounds}, replacing \theoref{t:topologicalgroup} by the inequalities in \corref{c:bounds}.
\end{proof}

{In particular $\TC_n(X)$ is bounded from above by a linear function on $n$ with slope $\cat(X^2)$.} According to \cite[(5.1)]{R}, this slope can be improved to $\TC_2(X)$.

\m
Next we consider the higher analogue of the usual cup-length lower bound for $\TC$. Recall $d_n=d_n^X\colon X\to X^n$ stands for the iterated diagonal map. In the following definition we allow cohomology with local coefficients.

\begin{defin} Given a space $X$ and a positive integer $n$, $\cl(X,n)$ denotes the cup-length of elements in the kernel of the map induced in cohomology by $d_n^X$. Thus, $\cl(X,n)$ is the largest integer $m$ for which there exist cohomology classes $u_i\in H^*(X^n; A_i)$ such that $d_n^*u_i=0$ for $i=1, \ldots, m$ and
\[
u_1\cupr \cdots \cupr u_m\ne 0\in H^*(X^n;A_1\otimes \cdots \otimes A_m).
\]
\end{defin}

The following result, which follows directly from \cite[Theorems~4 and~5']{Sv}, bounds $\TC_n(X)$ from below by $\cl(X,n)$, and from above by a ratio between the connectivity $\conn(X)$ and homotopy dimension $\hdim(X)$ of $X$---the latter being the smallest dimension of CW complexes having the homotopy type of $X$.

\begin{thm}\label{t:clest}
For any path-connected space $X$ we have the inequalities $$\cl(X,n)\le \TC_n(X)\leq\frac{n\hdim(X)}{\conn(X)+1}.$$
\end{thm}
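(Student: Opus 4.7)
The plan is to reduce the statement to Schwarz's classical cup-length estimate for the genus of a fibration, namely \cite[Theorem~4]{Sv}, which asserts that if $p\colon E\to B$ is a fibration and $v_1,\ldots,v_m$ are cohomology classes on $B$ (possibly with differing local coefficients) such that $p^*v_i=0$ for all $i$, then $v_1\cupr\cdots\cupr v_m=0$ as soon as $m>\Sv(p)$. Once this is in hand, the higher cup-length bound becomes an almost mechanical translation using the identification of $\TC_n(X)$ with the genus of a specific fibration.

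By \propref{p:tc2}, $\TC_n(X)=\Sv(e_n)$, and $e_n\colon X^{J_n}\to X^n$ is a fibrational substitute of the diagonal $d_n\colon X\to X^n$. Hence there is a homotopy equivalence $h\colon X\to X^{J_n}$ with $e_n\circ h=d_n$. For any local coefficient system $A$ on $X^n$ and any class $u\in H^*(X^n;A)$, the equality $d_n^*u=h^*e_n^*u$ together with the fact that $h^*$ is an isomorphism shows that $d_n^*u=0$ if and only if $e_n^*u=0$. Thus the cohomology conditions entering the definition of $\cl(X,n)$ are equivalent to the cohomology conditions to which \cite[Theorem~4]{Sv} applies for the fibration $e_n$.

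Now suppose $\cl(X,n)=m$ and pick classes $u_i\in H^*(X^n;A_i)$ as in the definition, so $d_n^*u_i=0$ for all $i$ and $u_1\cupr\cdots\cupr u_m\ne 0$ in $H^*(X^n;A_1\otimes\cdots\otimes A_m)$. By the equivalence just noted, $e_n^*u_i=0$ for all $i$. Applying \cite[Theorem~4]{Sv} to $p=e_n$ with the classes $u_i$, the non-vanishing of the cup product forces $m\le\Sv(e_n)=\TC_n(X)$, which is the desired inequality.

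The only genuine point requiring care — and arguably the only place where there is anything beyond bookkeeping — is checking that Schwarz's theorem is being invoked in the correct generality: the statement is needed for local coefficients and for classes coming from different coefficient systems $A_i$, combined via the external/internal cup product into $A_1\otimes\cdots\otimes A_m$. Both extensions are already built into the formulation of \cite[Theorem~4]{Sv}, so no additional work is required; if one preferred to avoid local coefficients, the same argument would give the simple-coefficient version verbatim.
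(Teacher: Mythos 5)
Your proposal is correct and is precisely what the paper means by ``follows directly from \cite[Theorem~4]{Sv}''; the paper gives no explicit proof, so you have merely written out, correctly, the translation (via \propref{p:tc2} and the homotopy equivalence $h$) that the phrase ``follows directly'' leaves to the reader.
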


We will also need the following bound on  $\cl(X\ts S^k, n)$ in terms of  $\cl(X,n)$.

\begin{thm}\label{t:multbysphere}
For any path-connected space $X$ and {positive integers} $n$ and $k$
we have $\cl(X\ts S^k,n)\ge \cl(X,n)+n-1$. This inequality can be improved
to $\,\cl(X\ts S^k,n)\ge \cl(X,n)+n$ provided $k$ is even and $H^*(X)$ is
torsion-free.
\end{thm}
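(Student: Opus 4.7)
The plan is to construct cohomology classes in $\ker d_n^*$ on $(X\ts S^k)^n\cong X^n\ts(S^k)^n$ by combining classes realizing $\cl(X,n)$ on the $X$-side with auxiliary classes manufactured from the fundamental class of $S^k$ on the sphere-side, and then to detect their cup products via the K\"unneth formula.

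Let $a\in H^k(S^k;\Z)$ be a generator, write $a_i=\pi_i^*a\in H^k((S^k)^n;\Z)$ for the pullback under the $i$-th projection $\pi_i:(S^k)^n\to S^k$, and set $b_j:=a_1-a_j$ for $j=2,\ldots,n$. Naturality of the diagonal gives $(d_n^{S^k})^*b_j=a-a=0$. Since $\{a_1,b_2,\ldots,b_n\}$ is an integral change of basis from $\{a_1,\ldots,a_n\}$, the product $b_2\cupr\cdots\cupr b_n$ is a nonzero monomial in the exterior algebra $H^*((S^k)^n;\Z)$. Now pick $u_1,\ldots,u_m$ realizing $\cl(X,n)=m$, let $p,q$ be the two projections out of $(X\ts S^k)^n$, and use naturality $p\circ d_n^{X\ts S^k}=d_n^X\circ\pi_X$ (and the $q$-analogue) to conclude $d_n^*p^*u_i=0=d_n^*q^*b_j$. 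The K\"unneth formula, applicable since $H^*((S^k)^n;\Z)$ is a finitely generated free graded abelian group and the local system $B:=A_1\otimes\cdots\otimes A_m$ on $X^n$ pulls back to $p^*B$ on the product, identifies the cup product of the $m+n-1$ classes $p^*u_1,\ldots,p^*u_m,q^*b_2,\ldots,q^*b_n$ with the K\"unneth tensor $(u_1\cupr\cdots\cupr u_m)\otimes(b_2\cupr\cdots\cupr b_n)$, which is nonzero. Hence $\cl(X\ts S^k,n)\ge m+n-1$.

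For the improvement when $k$ is even and $H^*(X)$ is torsion-free, exploit the strict commutativity of the $a_i$'s (forced by $k$ even): $b_2^2=(a_1-a_2)^2=-2a_1a_2$. Iterating and using $a_1^2=0$ yields
\[
b_2\cupr b_2\cupr b_3\cupr\cdots\cupr b_n=(-1)^{n-1}\cdot 2\cdot a_1a_2\cdots a_n,
\]
which is twice a generator of $H^{nk}((S^k)^n;\Z)\cong\Z$. Applying the same K\"unneth identification to the $m+n$ classes $p^*u_1,\ldots,p^*u_m,q^*b_2,q^*b_2,q^*b_3,\ldots,q^*b_n$ (all still in $\ker d_n^*$), their cup product corresponds to $\pm2\cdot w\otimes(a_1\cdots a_n)$, where $w:=u_1\cupr\cdots\cupr u_m$. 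The torsion-free hypothesis on $H^*(X)$, propagated through iterated K\"unneth to $H^*(X^n;B)$ for the realizing coefficient systems, forces $2w\ne 0$, so the product is nonzero and $\cl(X\ts S^k,n)\ge m+n$.

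The principal subtlety I anticipate lies in the torsion analysis of the improved bound: one must verify that torsion-freeness of $H^*(X)$ transfers to $H^*(X^n;A_1\otimes\cdots\otimes A_m)$ for the relevant realizing local systems, so as to conclude $2w\ne 0$. This is routine for constant $\Z$-coefficients (by iterated K\"unneth), but for genuinely twisted local coefficient systems one either restricts to systems of a suitable form (e.g.~the external tensor product of systems on $X$), or invokes a K\"unneth-type spectral sequence with local coefficients. The K\"unneth steps themselves are standard since $(S^k)^n$ has free, finitely generated cohomology of finite type.
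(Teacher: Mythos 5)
Your proof of the first inequality is essentially identical to the paper's: you use $b_j=a_1-a_j$ rather than $v_j-v_1$ (a trivial sign change), and where the paper observes that the reduced expansion of $(v_2-v_1)\cupr\cdots\cupr(v_n-v_1)$ contains the basis monomial $v_2\cupr\cdots\cupr v_n$ with coefficient $1$, you make the same point via the unimodular change of basis. For the improved bound under $k$ even and $H^*(X)$ torsion-free, the two proofs genuinely diverge. The paper produces a single element $w=v_1+\cdots+v_{n-1}-(n-1)v_n$ of $\ker d_n^*$ and takes its $n$-th cup power, which equals $-(n-1)\,n!\cdot v_1\cupr\cdots\cupr v_n$; you instead repeat $b_2$ in the list of kernel classes and compute $b_2^2\cupr b_3\cupr\cdots\cupr b_n=\pm 2\cdot a_1\cupr\cdots\cupr a_n$. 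Both kernel elements are used for the same purpose, but your version produces the smaller integer coefficient $\pm 2$ in place of $\pm(n-1)\,n!$, so your route is marginally sharper in its integrality demand (only $2$-torsion must be absent). Both arguments share the subtlety you rightly flag at the end --- that torsion-freeness of $H^*(X)$ must be seen to force non-vanishing of the relevant integer multiple of $u_1\cupr\cdots\cupr u_m$ in $H^*(X^n;A_1\otimes\cdots\otimes A_m)$ --- a point the paper leaves implicit as well, so this is not a gap peculiar to your write-up. In short: the proposal is correct, first half same as the paper, second half a different (and slightly more economical) construction of the auxiliary kernel class.
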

\begin{proof}
Let $v$ be a generator of $H^k(S^k)=\Z$. Let $p_i: (S^k)^n\to
S^k$ be the projection onto the $i\th$ factor and put
$v_i={p_i^*(v)}$ for $i=1,\ldots, n$.
Assume that $\cl(X,n)=m$ and take $u_1, \ldots, u_m$ such
that $d_n^*{(u_j)}=0$ for $j=1,\ldots, m$ and $u_1\cupr\cdots \cupr u_m\ne 0$.

\smallskip
To prove the first assertion note that $d_n^*(v_i-v_1)=0$ for $i>1$, while
the basis element $v_2\cupr\cdots\cupr v_n\in H^*\left((S^k)^n\right)$
appears in the reduced expansion (using distributivity) of
$(v_2-v_1)\cupr\cdots\cupr(v_n-v_1)$. Hence,
\[ u_1\cupr\cdots \cupr u_m\cupr (v_2-v_1)\cupr \cdots\cupr (v_n-v_1)\ne 0.
\]
Thus $\cl(X\ts S^k, n)\ge \cl(X,n)+n-1$.

\smallskip
Assume now that $k$ is even and that $H^*(X)$ is torsion-free. The element
$v_1+v_2+\cdots +v_{n-1}-(n-1)v_n$ lies in the kernel of $d_n^*$ and has
cup $n\th$ power equal to a non-zero multiple of
$v_1 \cupr v_2\cupr\cdots\cupr v_n$. Hence,
\[
u_1\cupr\cdots \cupr u_m\cupr (v_1+v_2+\cdots +v_{n-1}-(n-1)v_n)^n\ne 0.
\]
Thus $\cl(X\ts S^k, n)\ge \cl(X,n)+n$.
\end{proof}

In \cite{F1} Farber obtained the subadditivity of $\TC_2$ under suitable topological hypothesis. The corresponding property for higher topological complexity is given next.

\begin{prop}\label{p:product}
Let $X$ and $Y$ be path-connected spaces. If $(X\times Y)^n$ is normal, then
$
\TC_n(X\times Y)\le  \TC_n(X)+\TC_n(Y).
$
\end{prop}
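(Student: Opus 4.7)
The plan is to use \propref{p:tc2} to replace $\TC_n$ by the Schwarz genus of the diagonal map, and then reduce to \propref{p:prod}. More precisely, I would first observe that the diagonal $d_n^{X\times Y}\colon X\times Y\to (X\times Y)^n$ factors as
$$
X\times Y\;\xrightarrow{\;d_n^X\times d_n^Y\;}\;X^n\times Y^n\;\xrightarrow{\;\sigma\;}\;(X\times Y)^n,
$$
where $\sigma$ is the standard shuffle homeomorphism sending $\bigl((x_1,\ldots,x_n),(y_1,\ldots,y_n)\bigr)$ to $\bigl((x_1,y_1),\ldots,(x_n,y_n)\bigr)$. Since $\sigma$ is a homeomorphism, the Schwarz genus is invariant under this post-composition, so $\Sv(d_n^{X\times Y})=\Sv(d_n^X\times d_n^Y)$.

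Next, I would apply \propref{p:prod} to the product map $d_n^X\times d_n^Y$. The required normality hypothesis is precisely that the codomain $X^n\times Y^n$ is normal, which by the homeomorphism $\sigma$ is equivalent to the standing assumption that $(X\times Y)^n$ is normal. Thus
$$
\Sv(d_n^X\times d_n^Y)\;\le\;\Sv(d_n^X)+\Sv(d_n^Y).
$$
Combining this with \propref{p:tc2} on both sides gives
$$
\TC_n(X\times Y)=\Sv(d_n^{X\times Y})\le \Sv(d_n^X)+\Sv(d_n^Y)=\TC_n(X)+\TC_n(Y),
$$
which is the desired inequality.

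There is essentially no obstacle beyond bookkeeping: all the nontrivial work is packaged in \propref{p:tc2} (which identifies $\TC_n$ with the Schwarz genus of the diagonal and so lets us sidestep the path space model $e_n$) and in \propref{p:prod} (which provides the product inequality for Schwarz genus under the normality hypothesis). The only point requiring a moment of care is checking that the shuffle factorization is correct and that the normality of $(X\times Y)^n$ transfers across $\sigma$ to $X^n\times Y^n$, both of which are immediate.
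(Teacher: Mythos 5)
Your argument is correct and follows essentially the same route as the paper: both proofs identify the relevant map over $(X\times Y)^n$ with a product map over $X^n\times Y^n$ via the shuffle homeomorphism and then invoke \propref{p:prod}. The one cosmetic difference is that the paper works directly with the fibration $e_n$ (and therefore also writes down the corresponding natural homeomorphism $(X\times Y)^{J_n}\cong X^{J_n}\times Y^{J_n}$ on total spaces), whereas you pass through \propref{p:tc2} and factor the diagonal $d_n^{X\times Y}$, which spares you the path-space bookkeeping; the content and the key lemma used are the same.
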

\begin{proof}
The natural homeomorphisms
\begin{equation*}\begin{aligned}
(X\ts Y)^n &\to X^n\ts Y^n,\\
((x_1, y_1), \ldots, (x_n,y_n))&\mapsto (x_1, \ldots, x_n,y_1, \ldots, y_n),
\;\;x_i\in X, \;\;y_j \in Y
\end{aligned}\end{equation*}
and
\begin{equation*}
\CD
\begin{aligned}
(X\ts Y)^{J_n}&\to X^{J_n}\ts Y^{J_n},\\
 (\gf: J_n\to X\ts Y)&\mapsto ((p_X\circ\gf: J_n \to X), (p_Y\circ\gf: J_n \to Y))
\end{aligned}
\endCD
\end{equation*}
fit into {the} commutative diagram
\[
\CD
(X\ts Y)^{J_n}@>>> X^{J_n}\ts Y^{J_n}\\
@Ve_n^{X\ts Y}VV @VVe_n^X \ts e_n^Y V\\
(X\ts Y)^n @>>> X^n\ts Y^n.
\endCD
\]
So, the desired conclusion follows directly from \propref{p:prod}.
\end{proof}

As revealed in the case of spheres (next), \propref{p:product} is optimal in general.

\begin{cor}\label{c:spheres}
$\TC_n(S^{k_1}\times S^{k_2}\times \cdots \times S^{k_m})=m(n-1)+l$ where $l$ is the number of even dimensional spheres.
\end{cor}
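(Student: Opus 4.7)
The plan is to sandwich $\TC_n(\prod_{i=1}^m S^{k_i})$ between matching lower and upper bounds of $m(n-1)+l$.

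For the lower bound I would iterate Theorem~\ref{t:multbysphere}, starting from $X_0=\{\mathrm{pt}\}$ (where $\cl(X_0,n)=0$) and setting $X_i=X_{i-1}\times S^{k_i}$. Each step raises the $d_n$-cup-length by at least $n-1$, and by $n$ whenever $k_i$ is even; the improved increment is available throughout because all intermediate cohomologies are torsion free, being tensor products of sphere cohomologies. After $m$ steps this yields $\cl(\prod S^{k_i},n)\ge(m-l)(n-1)+ln=m(n-1)+l$, and Theorem~\ref{t:clest} lifts this to the desired lower bound on $\TC_n$.

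For the upper bound I would apply Proposition~\ref{p:product} repeatedly (each finite product of spheres is compact Hausdorff, hence normal) to obtain $\TC_n(\prod S^{k_i})\le\sum_i\TC_n(S^{k_i})$. This reduces the task to estimating $\TC_n(S^k)$ for a single sphere. From~\eqref{eq:tccat} together with the subadditivity of category we have $\TC_n(S^k)\le\cat((S^k)^n)=n$; combined with the lower bound above, this already settles the case of even $k$. What remains is the sharper inequality $\TC_n(S^k)\le n-1$ for odd $k$; granting it, summing over the $m$ factors gives $(m-l)(n-1)+ln=m(n-1)+l$, matching the lower bound.

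The main obstacle is this sharper single-sphere estimate for odd $k$. When $k=1$ or $3$ the sphere $S^k$ is a topological group and Theorem~\ref{t:topologicalgroup} immediately delivers $\TC_n(S^k)=\cat((S^k)^{n-1})=n-1$. For general odd $k$, I would construct, along the lines of~\cite{R}, an explicit cover $(S^k)^n=V_0\cup\cdots\cup V_{n-1}$ by $n$ open sets, each carrying a continuous section of $e_n^{S^k}$; such a cover is produced by extending Farber's $n=2$ recipe, using a non-vanishing tangent vector field on $S^k$ to interpolate canonically between $n$-tuples of points in which some coordinates happen to coincide or be antipodal.
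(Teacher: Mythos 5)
Your argument has the same two-sided structure as the paper's: the lower bound comes from the $d_n$-cup-length via Theorems~\ref{t:clest} and~\ref{t:multbysphere}, and the upper bound from the product inequality of Proposition~\ref{p:product} reduced to single spheres. The lower half is sound: iterating Theorem~\ref{t:multbysphere} from a point does give $\cl\bigl(\prod_i S^{k_i},n\bigr)\geq m(n-1)+l$, since each intermediate cohomology is free and so the improved even-dimensional increment is always available.

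The gap is in the upper bound. You need $\TC_n(S^k)\leq n-1$ for \emph{every} odd $k$; you establish this only for $k\in\{1,3\}$ via Theorem~\ref{t:topologicalgroup}, and for general odd $k$ you sketch an explicit cover ``using a non-vanishing tangent vector field \dots\ to interpolate canonically between $n$-tuples of points in which some coordinates happen to coincide or be antipodal.'' That is exactly the nontrivial step, and the description is too vague to count as a proof: it is not apparent how this yields only $n$ open pieces of $(S^k)^n$, each carrying a continuous section of $e_n^{S^k}$, rather than a proliferating number of pieces dictated by the coincidence and antipodality patterns of an $n$-tuple. The paper avoids the issue entirely by quoting the equality $\TC_n(S^k)=\cl(S^k,n)$ from \cite[Section~4]{R}, where the explicit optimal motion planner on $(S^k)^n$ for odd $k$ is actually constructed. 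You should either carry out a construction of comparable precision or invoke that reference; as written, the odd-sphere upper bound rests on an unverified assertion.
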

\begin{proof}
Note that $\TC_n(S^k)=\cl(S^k,n)$ for all $k$,~\cite[Section 4]{R}.
Then the inequality $\cl(S^{k_1}\times \cdots \times S^{k_m}, n)\ge m(n-1)+l$ follows from \theoref{t:multbysphere} by induction, so $\TC_n(S^{k_1}\times \cdots \times S^{k_m})\ge m(n-1)+l$ by \theoref{t:clest}.
The opposite estimate follows from \propref{p:product}.
\end{proof}

The calculation of the higher topological
complexity of the $k$-dimensional torus $T^k=(S^1)^k$,
partially solved for $k=2$ in \cite[Proposition 5.1]{R},
is a consequence of either \corref{c:spheres} or
\theoref{t:topologicalgroup}.

\begin{cor}\label{c:torus}
$\TC_n(T^k)=k(n-1)$.
\end{cor}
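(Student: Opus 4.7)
The plan is to derive this result as an immediate consequence of previously established material, along one of two equivalent routes that the paper explicitly suggests.

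The first route proceeds via \corref{c:spheres}. I would simply observe that $T^k = S^1\times S^1\times \cdots \times S^1$ ($k$ factors), so in the notation of that corollary we have $m=k$ spheres, each of dimension $k_i = 1$. Since $1$ is odd, none of these spheres are even-dimensional, hence $l=0$. Plugging into the formula $m(n-1)+l$ yields $\TC_n(T^k) = k(n-1)$, as desired.

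The second route, which I would note in parallel, uses \theoref{t:topologicalgroup}. Since $T^k$ is a path-connected topological group, that theorem gives $\TC_n(T^k) = \cat((T^k)^{n-1}) = \cat(T^{k(n-1)})$. It remains to invoke the classical fact that $\cat(T^m) = m$: the upper bound $\cat(T^m)\leq m$ follows from the standard product cell structure (or the general fact $\cat(X\times Y)\leq \cat(X)+\cat(Y)$ applied inductively starting from $\cat(S^1)=1$), while the matching lower bound follows from the cup-length of $T^m$, which equals $m$ since the product of the $m$ distinguished one-dimensional generators of $H^*(T^m)$ is a nonzero top class. Applying this with $m = k(n-1)$ yields the same conclusion.

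There is no real obstacle here: the whole point is that \corref{c:torus} is a direct corollary of either \corref{c:spheres} or \theoref{t:topologicalgroup}. The only mild subtlety worth flagging is that the second approach relies on a standard (but unstated in the excerpt) value $\cat(T^m)=m$, so if one wishes to keep the proof entirely self-contained within the machinery developed in the paper, the first route via \corref{c:spheres} is preferable.
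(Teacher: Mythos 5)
Your proposal is correct and matches the paper exactly: the text preceding \corref{c:torus} states that the calculation ``can now be completed using either \corref{c:spheres} or \theoref{t:topologicalgroup},'' and you have spelled out both of these routes. No gap or deviation to report.
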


\begin{thm}\label{t:cohom}
Let $X$ be a CW complex of finite type, and $R$ a principal
ideal domain. Take $u\in H^d(X;R)$
with $d>0$, $d$ even, and assume that
the $n$-fold iterated self $R$-tensor product
$u^m\otimes\cdots\otimes u^m\in(H^{md}(X;R))^{\otimes n}$
is an element of infinite additive order. Then $\TC_n(X)\ge mn$.
\end{thm}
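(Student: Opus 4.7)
The plan is to apply the cup-length lower bound of Theorem~\ref{t:clest} by exhibiting $mn$ classes in $H^*(X^n;R)$ that lie in $\ker d_n^*$ and whose cup product is nonzero. For $i=1,\ldots,n$, let $p_i:X^n\to X$ be the $i$-th projection and set $u_i=p_i^*(u)\in H^d(X^n;R)$, so that $d_n^*(u_i)=u$. Any $R$-linear combination of the $u_i$ whose coefficients sum to $0$ therefore lies in $\ker d_n^*$. In particular, I would set
$$w=u_1+u_2+\cdots+u_{n-1}-(n-1)\,u_n\in H^d(X^n;R),$$
so that $d_n^*(w)=0$, and take the required $mn$ classes to be $mn$ copies of $w$. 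It then suffices to show $w^{mn}\neq 0$ in $H^{mdn}(X^n;R)$.

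Since $d$ is even, the $u_i$ commute in the graded-commutative ring $H^*(X^n;R)$, and one may expand $w^{mn}$ by the multinomial theorem as an $R$-linear combination of monomials $u_1^{k_1}\cdots u_n^{k_n}$ with $k_1+\cdots+k_n=mn$. By the K\"unneth theorem, valid because $X$ is a CW complex of finite type and $R$ is a PID, the cross-product map
$$\mu:\bigotimes_{i=1}^n H^*(X;R)\hookrightarrow H^*(X^n;R)$$
is a split monomorphism of multigraded $R$-modules, and $u_1^{k_1}\cdots u_n^{k_n}=\mu(u^{k_1}\otimes\cdots\otimes u^{k_n})$. Each monomial in the expansion therefore lives in its own multidegree summand $H^{dk_1}(X;R)\otimes\cdots\otimes H^{dk_n}(X;R)$, so distinct monomials cannot cancel one another.

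To conclude, I would project $w^{mn}$ onto the K\"unneth summand $H^{md}(X;R)^{\otimes n}$. The only surviving term comes from $k_1=\cdots=k_n=m$, with coefficient $\binom{mn}{m,\ldots,m}(-1)^m(n-1)^m$, a nonzero integer. Since $u^m\otimes\cdots\otimes u^m$ has infinite additive order by hypothesis, multiplication by any nonzero integer keeps it nonzero; hence the projection of $w^{mn}$, and therefore $w^{mn}$ itself, is nonzero. Theorem~\ref{t:clest} then yields $\TC_n(X)\geq\cl(X,n)\geq mn$. The main point to watch is that the K\"unneth splitting cleanly isolates the $(m,\ldots,m)$-multidegree piece from the other monomials and from any Tor contributions---which sit in complementary direct summands---and it is precisely here that the parity of $d$ (for graded commutativity, hence the binomial/multinomial expansion) and the finite-type-over-a-PID hypothesis (for the split K\"unneth) are each used essentially.
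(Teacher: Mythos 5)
Your argument is correct, and it reaches the same target class $u^m\otimes\cdots\otimes u^m$ in the K\"unneth summand, but via a genuinely different choice of cohomology classes than the paper. The paper takes the product
\[
v=(u_2-u_1)^{2m}(u_3-u_1)^m\cdots(u_n-u_1)^m,
\]
an $mn$-fold product of classes of the form $u_j-u_1\in\ker d_n^*$, and verifies $v\neq 0$ by first assuming $\dim X\le dm+1$ (forcing $u_i^{m+1}=0$ so that the binomial expansion of each factor collapses to a single surviving term, giving $v=(-1)^m\binom{2m}{m}u_1^m\cdots u_n^m$) and then handling general $X$ by restricting to the $(dm+1)$-skeleton. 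You instead use the $mn$-th power of a single class $w=u_1+\cdots+u_{n-1}-(n-1)u_n\in\ker d_n^*$, and isolate the coefficient $\binom{mn}{m,\dots,m}(-1)^m(n-1)^m$ of $u_1^m\cdots u_n^m$ by projecting onto the $(md,\dots,md)$-multigraded summand of $\bigotimes_i H^*(X;R)$, using that the cross product $\mu$ is injective and that distinct monomials land in distinct multigraded summands of its domain. Your element $w$ is exactly the one the paper itself uses in the even case of \theoref{t:multbysphere}, so it is well in the spirit of the surrounding results. The tradeoff: your route avoids the skeletal truncation device entirely at the cost of slightly more bookkeeping with the K\"unneth decomposition, while the paper's argument trades cleaner projections for the detour through $X^{(dm+1)}$. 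Both use the hypotheses in the same places (evenness of $d$ for graded commutativity, finite type over a PID for the split K\"unneth monomorphism, infinite additive order of $u^m\otimes\cdots\otimes u^m$ to survive multiplication by the integer coefficient), and both correctly invoke \theoref{t:clest} to conclude.
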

\begin{proof}
{For $i=1,\ldots,n$,} let $p_i: X^n\to X$ be the projection onto the $i\th$ factor and put $u_i=p_i^*{(u)}\in H^d(X^n;R)$.
In view of~\theoref{t:clest}, the required inequality follows from
\begin{equation}\label{explain}
v:=(u_2-u_1)^{2m}(u_3-u_1)^m\cdots(u_n-u_1)^m \ne 0.
\end{equation}
In order to check~(\ref{explain}), note that $v$ comes from the tensor {product, which} injects into the cohomology of the cartesian product
by the K\"unneth Theorem (this is where the finiteness hypotheses
are used).
So, calculations {can be} performed in the former $R$-module.
Now, assuming that $\dim(X) \le dm+1$, we have
\begin{eqnarray*}
v &=& (u_2-u_1)^{2m}(u_3-u_1)^m\cdots(u_n-u_1)^m\\
  &=&  (-1)^m\binom{2m}{m}u_1^mu_2^m(u_3-u_1)^m\cdots(u_n-u_1)^m\\
  &=& (-1)^m\binom{2m}{m}u_1^mu_2^mu_3^m(u_4-u_1)^m\cdots(u_n-u_1)^m\\
  &=&  \cdots\\
  &=&  (-1)^m\binom{2m}{m}u_1^mu_2^m\cdots u_n^m,
\end{eqnarray*}
which is non-zero by hypothesis. On the other hand,
for $\dim(X)$ arbitrary, consider the skeletal inclusion
$j: X^{(dm+1)}\to X$ and note that $v\ne 0$ since $j^*{(v)}\ne 0$.
\end{proof}

\begin{cor}\label{c:symplec}
For every closed simply connected symplectic manifold $M^{2m}$ we have $\TC_n(M)=nm$.
\end{cor}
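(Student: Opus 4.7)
My plan is to obtain the equality $\TC_n(M)=nm$ by matching an upper and a lower bound, both of which come directly from results already proved in this section.

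\textbf{Upper bound.} First I would use \corref{c:bounds} to reduce to the category estimate $\TC_n(M)\le\cat(M^n)$. Because $M^{2m}$ is simply connected with the homotopy type of a CW complex of dimension $2m$, the standard connectivity--dimension bound for Lusternik--Schnirelmann category (e.g.\ \cite[Theorem 1.50]{CLOT}) gives $\cat(M)\le m$. Iterating the product inequality $\cat(X\times Y)\le\cat(X)+\cat(Y)$ (valid here since finite cartesian powers of a closed manifold are again normal CW complexes) yields $\cat(M^n)\le n\cdot\cat(M)\le nm$, so $\TC_n(M)\le nm$.

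\textbf{Lower bound.} For the matching lower bound I would apply \theoref{t:cohom} with $R=\mathbb{R}$ and $u=[\omega]\in H^2(M;\mathbb{R})$ the symplectic class, so $d=2$ is positive and even. Since $\omega$ is symplectic, $\omega^m$ is a volume form on the closed oriented manifold $M^{2m}$, and so $u^m=[\omega^m]\neq 0\in H^{2m}(M;\mathbb{R})$. Because $\mathbb{R}$ is a field and $H^{2m}(M;\mathbb{R})$ is a $\mathbb{R}$-vector space, the iterated tensor product
\[
u^m\otimes\cdots\otimes u^m\in\bigl(H^{2m}(M;\mathbb{R})\bigr)^{\otimes n}
\]
is nonzero, hence of infinite additive order. \theoref{t:cohom} then delivers $\TC_n(M)\ge mn$.

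\textbf{Conclusion.} The two inequalities combine to give $\TC_n(M)=nm$, as required. There is no real obstacle: the simple-connectivity hypothesis is used only to pass from $\dim(M)=2m$ to $\cat(M)\le m$, while the symplectic hypothesis is used only to produce a class $[\omega]\in H^2(M;\mathbb{R})$ whose $m$-th power survives in top cohomology, which is precisely the input needed by \theoref{t:cohom}.
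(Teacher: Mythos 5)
Your argument matches the paper's almost verbatim: the lower bound via \theoref{t:cohom} applied to $u=[\omega]$ over $\mathbb{R}$ (where torsion-freeness is automatic), and the upper bound via $\TC_n(M)\le\cat(M^n)$, the product inequality for category, and $\cat(M)\le m$ from the connectivity--dimension bound for a simply connected $2m$-manifold. The only cosmetic difference is that you cite \cite[Theorem~1.50]{CLOT} for $\cat(M)\le m$ while the paper invokes \cite[Theorem~5, page~75]{Sv}; these are the same estimate.
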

\begin{proof}
This follows {from}
\theoref{t:cohom} (taking $u$ to be the cohomology class
given by the symplectic 2-form on $M$, {and noting} that the hypothesis on
$u^m\otimes\cdots\otimes u^m$ holds since {the} coefficients are taken over the reals), {inequality} \eqref{eq:tccat}, the product inequality {for category}, and the inequality $\cat (M^{2m})\le m$ which follows from \cite[Theorem~5, page~75]{Sv}. (This argument also yields $\cat (M^{2m})=m$, a well known fact.)
\end{proof}

Of course, \corref{c:symplec} applies to complex projective spaces.
In the quaternionic case essentially the same proof gives:

\begin{cor}\label{c:projective} The quaternionic projective space of
real dimension $4m$, $\mathbb{H}\P^m$, has $\TC_n(\mathbb{H}\P^m)=nm$.
\end{cor}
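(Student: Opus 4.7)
The plan is to mirror the argument used for \corref{c:symplec}, substituting the generator of $H^4(\mathbb{H}\P^m)$ for the symplectic $2$-form and exploiting the fact that $\mathbb{H}\P^m$ is $3$-connected of real dimension $4m$.

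For the lower bound $\TC_n(\mathbb{H}\P^m)\ge nm$, I would invoke \theoref{t:cohom} with coefficient ring $R=\mathbb{Z}$ and $u\in H^4(\mathbb{H}\P^m;\mathbb{Z})$ a generator. Since $H^*(\mathbb{H}\P^m;\mathbb{Z})=\mathbb{Z}[u]/(u^{m+1})$, the cup power $u^m$ generates $H^{4m}(\mathbb{H}\P^m;\mathbb{Z})\cong\mathbb{Z}$, so the $n$-fold tensor power $u^m\otimes\cdots\otimes u^m$ generates $(H^{4m}(\mathbb{H}\P^m;\mathbb{Z}))^{\otimes n}\cong\mathbb{Z}$ and hence has infinite additive order. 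The degree $d=4$ is even, so the hypotheses of \theoref{t:cohom} are in force, and the internal dimensional condition $\dim(\mathbb{H}\P^m)=4m\le dm+1=4m+1$ used in that proof is also met.

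For the upper bound $\TC_n(\mathbb{H}\P^m)\le nm$, I would chain three standard estimates: first, $\TC_n(\mathbb{H}\P^m)\le\cat((\mathbb{H}\P^m)^n)$ from \eqref{eq:tccat}; second, $\cat((\mathbb{H}\P^m)^n)\le n\cat(\mathbb{H}\P^m)$ from the product inequality for category; third, $\cat(\mathbb{H}\P^m)\le m$ from \cite[Theorem~5, page~75]{Sv}, which bounds the Lusternik--Schnirelmann category of a $q$-connected CW complex of dimension $r$ by $r/(q+1)$---here $q=3$ and $r=4m$ give exactly $m$.

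Combining the two bounds yields the claimed equality. I do not anticipate a genuine obstacle: this is a direct quaternionic transcription of the symplectic case, requiring only the two verifications above (that the degree-$4$ generator meets the hypotheses of \theoref{t:cohom}, and that $\mathbb{H}\P^m$ is connected enough for Schwarz's bound to return exactly $m$). The one aspect worth flagging is that, unlike in \corref{c:symplec}, one cannot appeal to the symplectic structure to see $\cat(\mathbb{H}\P^m)=m$; the equality $\cat(\mathbb{H}\P^m)=m$ is nevertheless immediate from the cup-length lower bound $\cl_{\R}(\mathbb{H}\P^m)=m$ combined with Schwarz's upper bound.
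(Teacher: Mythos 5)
Your proposal is correct and follows the same route the paper takes: the paper simply states that ``essentially the same proof'' as in \corref{c:symplec} applies, i.e.~\theoref{t:cohom} with the degree-$4$ generator for the lower bound $\TC_n(\mathbb{H}\P^m)\ge nm$, followed by $\TC_n\le\cat((\mathbb{H}\P^m)^n)\le n\cat(\mathbb{H}\P^m)\le nm$ via \eqref{eq:tccat}, the product inequality for category, and Schwarz's $3$-connected/dimension-$4m$ bound. Your use of $\mathbb{Z}$ coefficients (rather than $\mathbb{R}$, which the symplectic case required) is a clean and legitimate simplification that works because $u^m$ generates $H^{4m}(\mathbb{H}\P^m;\mathbb{Z})\cong\mathbb{Z}$ on the nose.
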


Note that Corollaries~\ref{c:symplec} and~\ref{c:projective} imply that the upper bound in Corollary~\ref{c:bounds} as well as both bounds in \theoref{t:clest} are optimal in general.

\section{Symmetric topological complexity}\label{s:sym}

In this section we introduce two symmetric versions of $\TC_n$.
One of them, $\STC_n$, has the advantage of being a homotopy
invariant. The other, $\TC_n^S$, gives (up to our normalization convention) the natural generalization of the symmetric topological complexity studied by Farber and Grant in \cite{FGsym}. We begin with the $n=2$ case of the homotopically well-behaved version.

\m
Consider the involutions $\tau: X\ts X \to X\ts X$ and $\ov \tau:
P(X) \to P(X)$ defined by $\tau(x,y)=(y,x)$ and
$\ov\tau(\gamma)(t)=\gamma(1-t)$, for $(x,y)\in X\times X$ and $\gamma\in P(X)$. We work with symmetric subsets $A\subseteq X\ts X$ (i.e.~those for which $\tau A=A$), and equivariant maps $s:A\to P(X)$ (i.e.~those satisfying $\ov\tau(s(a))= s(\tau(a))$ for all $a\in A$).

\begin{defin}\label{d:stc}
$\STC(X)$ is the least integer $k$ such that $X\ts
X=A_0\cup A_1\cup\,\cdots\,\cup A_k$ where each $A_i$ is open,
symmetric, and admits a continuous equivariant section
$s_i: A_i\to P(X)$ of the map $e_2$ in \eqref{enX}.
\end{defin}

Before proving (in \propref{p:hinv} below)
that $\STC(X)$ is a homotopy invariant of $X$, we show that its numerical value differs by at most one from the numerical value of Fraber-Grant's symmetric topological complexity. In accordance with the normalization hypothesis in this paper, we must compare {$\STC(X)$} with 
\begin{equation}\label{eq:tcSfarber}
\TC_2^S(X)=\Sv(\eps_2)+1
\end{equation}
where $\eps_2$ is the map on the right hand side of \eqref{lases}. Note that, under the perspective of~\cite{FGsym}, the ``$+1$'' summand in~(\ref{eq:tcSfarber}) is meant to take into account the obvious equivariant section of $e_2$ on the diagonal.

\begin{prop}\label{p:compare}
For each ENR $X$ we have
\[
\TC_2^S(X)-1\le \STC(X)\le\TC_2^S(X).
\]
\end{prop}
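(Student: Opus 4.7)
I will prove the two inequalities separately, translating in both directions between $\Sigma_2$-equivariant sections of $e_2$ over symmetric open subsets of $X\times X$ and ordinary sections of $\varepsilon_2$ over open subsets of $B_2(X)$ via the pull-back square~\eqref{lases}. The unit gap between the two invariants reflects the fact that a covering witnessing $\STC(X)$ must cover the diagonal $\Delta_X\subset X\times X$, whereas a covering witnessing $\Sv(\varepsilon_2)$ only needs to cover $B_2(X)=C_2(X)/\Sigma_2$, which omits $\Delta_X$. Producing one extra symmetric neighborhood of $\Delta_X$ carrying an equivariant section of $e_2$ is the single step at which the ENR hypothesis intervenes.

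For $\TC_2^S(X)-1\le \STC(X)$, I start from a symmetric open cover $A_0,\ldots,A_k$ of $X\times X$ with equivariant sections of $e_2$ realizing $\STC(X)=k$. Intersecting with $C_2(X)$ preserves symmetry and the equivariant sections; since $\Sigma_2$ acts freely on $C_2(X)$, passing to $\Sigma_2$-quotients yields an open cover of $B_2(X)$ by $k+1$ sets over which $\varepsilon_2$ admits continuous sections, the equivariance being precisely the condition needed for a well-defined descent via~\eqref{lases}. This forces $\Sv(\varepsilon_2)\le k$, so $\TC_2^S(X)=1+\Sv(\varepsilon_2)\le 1+\STC(X)$.

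For $\STC(X)\le \TC_2^S(X)$, I start from an open cover $B_0,\ldots,B_k$ of $B_2(X)$ with sections of $\varepsilon_2$ realizing $\Sv(\varepsilon_2)=k$. Pulling back along $\pi_{2,X}: C_2(X)\to B_2(X)$ gives, by the remark following~\eqref{lases}, symmetric open subsets $\tilde B_i=\pi_{2,X}^{-1}(B_i)\subset C_2(X)$ carrying $\Sigma_2$-equivariant sections of $e_2$. These cover $C_2(X)$ but miss $\Delta_X$, so I add a symmetric open neighborhood $A_{k+1}$ of the diagonal carrying an equivariant section. Using the ENR hypothesis, embed $X\subset\mathbb{R}^N$ with a retraction $r:V\to X$ from an open neighborhood $V$ of $X$, and take
\[ A_{k+1}=\bigl\{(x,y)\in X\times X : (1-t)x+ty\in V\text{ for every }t\in[0,1]\bigr\}, \]
with section $s_{k+1}(x,y)(t)=r\bigl((1-t)x+ty\bigr)$. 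The identity $s_{k+1}(y,x)(t)=r(ty+(1-t)x)=\bar\tau(s_{k+1}(x,y))(t)$ exhibits the equivariance. Together with the $\tilde B_i$'s, this produces a symmetric open cover of $X\times X$ with $k+2$ members carrying equivariant sections of $e_2$, hence $\STC(X)\le k+1=\TC_2^S(X)$.

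The main obstacle is the construction of the equivariant section near the diagonal, which is what requires the ENR assumption; everything else is a purely formal manipulation of the pull-back square~\eqref{lases} together with the freeness of the $\Sigma_2$-action on $C_2(X)$.
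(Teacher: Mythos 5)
Your proof is correct and follows essentially the same route as the paper's: intersecting with $C_2(X)$ and descending through the pull-back square for the first inequality, then pulling back $\varepsilon_2$-sections and adjoining a symmetric equivariant section near the diagonal for the second. The only cosmetic difference is that you spell out the ENR construction of that diagonal neighborhood (linear segments composed with a retraction) where the paper cites the proof of \cite[Corollary~9]{FGsym}; note that in your equivariance check the intermediate expression should read $r((1-t)y+tx)$ rather than $r(ty+(1-t)x)$, though the final identity is still correct.
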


\begin{remark}\label{greatergenerality}
We will prove a more general version of \propref{p:compare}
(\theoref{generalinequality} below). The proof of the general version is considerably more elaborate as it
requires a rather involved use of {\it equivariant} euclidean
neighborhood retracts. For the sake of clarity, we offer
first the much simpler argument proving \propref{p:compare}.
\end{remark}

\begin{proof}[Proof of Proposition~{\em\ref{p:compare}}] 
To prove the first inequality, take an open covering
$X\ts X=A_0\hspace{.2mm}\cup\cdots\cup\hspace{.2mm} 
A_k$ where each $A_i$ is symmetric and
has {a continuous} equivariant section of $e_2$.
The $\Z/2$-action $\tau$ on $X\ts X$
yields the orbit map ${\rho_2}: X\ts X\to (X\ts X)/\tau$. Then, {for} each {$i=0,\ldots,k$,} 
${\rho_2}(A_i-d_2(X))$ is open and has a section of $\eps_2$, 
and thus $\Sv(\eps_2)\le \STC(X)$.

\smallskip
For the second inequality, take $B_0, \ldots, B_l$, with $B_0\cup\cdots\cup
B_l={\rho_2}(X\ts X-d_2(X))$ where each $B_i$ is open and has a
section of $\eps_2$. Then each ${\rho_2}^{-1}(B_i)$ is symmetric, open in
$X\times X$, and admits an equivariant section of $e_2$,
cf.~\cite[Lemma 8]{FGsym}.
Further, since $X$ is an ENR, there is a symmetric open
neighborhood of $d_2(X)$
supporting an equivariant section of $e_2$ (see the proof
of~\cite[Corollary 9]{FGsym}). Consequently $\STC(X)\le 1+\Sv(\eps_2)$.
\end{proof}

The two examples below show that both bounds in \propref{p:compare} are optimal in general.

\begin{ex}\label{e:sigma}
For $X$ contractible we have
$\TC_2(X)=\STC(X)=0$ while $\TC^S_2(X)=1$.
Indeed, take a point $x_0\in X$ and a contraction $H: X\ts I \to X$, with
$H(x,0)=x$ and $H(x,1)=x_0$ for all $x\in X$. Given {$(a,b)\in X\times X$,} take the
path ${\sigma=s(a,b)}: I \to X$ such that ${\sigma}(t)=H(a,2t)$ for $0\le t\le 1/2$ and
${\sigma}(t)=H(b,2-2t)$ {for $1/2\le t\le 1$.} Then $s$ is an equivariant
section for $e_2^X$ and, in view of
the general inequality
$$
\TC_2(X)\le\STC(X),
$$
this gives $\TC_2(X)=\STC(X)=0$. The same argument, but now using
\eqref{eq:tcSfarber}, gives $\TC^S_2(X)=1$ (see \cite[Example 7]{FG}).
\end{ex}

\begin{ex}\label{e:sigmasphere}
The numbers $\TC_2^S(S^k)$ and $\TC_2(S^k)$ have been computed in~\cite[Corollary~18]{FGsym} and~\cite{F1}, respectively. Here we use the  inequalities $\TC_2\le\STC\le\TC^S$ together with the fact that $\TC_2^S(S^k)=2=\TC_2(S^{2k})$ to deduce $\TC^\Sigma(S^{2k}) = \TC^S_2(S^{2k})=2$ for all $k$. On the other hand, since
$\TC_2(S^{2k+1})=1$, the above argument only gives $1\leq\TC^\Sigma(S^{2k+1}) \leq \TC^S_2(S^{2k+1})=2$. Incidentally, note that the
construction in~\cite[Example 4.8]{F3} gives an open covering
$S^{2k+1}\times S^{2k+1}=A_0\cup A_1$ by symmetric sets $A_i$, and continuous sections of $e_2$ over each $A_i$, {$i=0,1$.} However, one of these sections is not equivariant, which prevents us from deducing $\STC(S^k)=1$.
\end{ex}

We next define higher analogues of $\STC$. Recall that for a given $n$, the symmetric group $\Si_n$ acts on $X^n$ {and} on $X^{J_n}$ by permuting coordinates and paths, respectively. Further, the fibration $e_n$ in \eqref{enX} is $\Si_n$-equivariant. We now work with symmetric subsets $A\subseteq X^n$ (i.e.~those for which $\sigma A=A$ for all $\sigma\in\Sigma_n$), and equivariant maps $s:A\to X^{J_n}$ (i.e.~ those satisfying $\sigma(s(a))= s(\sigma(a))$ for all $a\in A$ and
$\sigma\in\Sigma_n$). \defref{d:stc} can now be extended to:

\begin{defin}\label{d:stcn}
$\STC_n(X)$ is the least integer $k$ such that $X^n=A_0\hspace{.2mm}\cup
A_1\cup\,\cdots\,\cup A_k$ where each $A_i$ is open, symmetric and admits a
continuous equivariant section $s_i: A_i\to X^{J_n}$ for the map $e_n$ in~(\ref{enX}).
\end{defin}

\begin{prop}\label{p:hinv}
$\STC_n(X)$ is a homotopy invariant of $X$.
\end{prop}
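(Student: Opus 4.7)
The plan is to mimic the standard proof that $\TC$ is a homotopy invariant, but carrying the $\Sigma_n$-equivariance through every construction. Let $f\colon X\to Y$ be a homotopy equivalence with homotopy inverse $g\colon Y\to X$, and fix a homotopy $H\colon X\ts I\to X$ with $H(x,0)=gf(x)$ and $H(x,1)=x$. The cartesian power $f^n\colon X^n\to Y^n$ and the post-composition map $f_*\colon X^{J_n}\to Y^{J_n}$, $f_*(\gamma)=f\circ\gamma$, are $\Sigma_n$-equivariant, and analogously for $g$. I will show $\STC_n(X)\le \STC_n(Y)$; swapping the roles of $f$ and $g$ then gives the reverse inequality.

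Assume $\STC_n(Y)=k$ and pick an open symmetric cover $Y^n=A_0\cup\cdots\cup A_k$ with equivariant sections $s_i\colon A_i\to Y^{J_n}$ of $e_n^Y$. Set $B_i=(f^n)^{-1}(A_i)\subseteq X^n$. Each $B_i$ is open; because $f^n$ is $\Sigma_n$-equivariant and $A_i$ is symmetric, $B_i$ is symmetric too, and the $B_i$ cover $X^n$. It remains to produce a continuous equivariant section $t_i\colon B_i\to X^{J_n}$ of $e_n^X$ on each $B_i$.

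For $x=(x_1,\ldots,x_n)\in B_i$, write $s_i(f^n(x))=(\gamma_1,\ldots,\gamma_n)$ with common initial point $\gamma_j(0)$ and $\gamma_j(1)=f(x_j)$. Define $t_i(x)$ to be the multipath whose $j^{\mathrm{th}}$ strand is
\[
t_i(x)_j(u)=\begin{cases}
g(\gamma_j(2u)),& 0\le u\le 1/2,\\[1mm]
H(x_j,\,2u-1),& 1/2\le u\le 1.
\end{cases}
\]
All strands agree at $u=0$ (at the point $g(\gamma_1(0))$) and $t_i(x)_j(1)=x_j$, so $t_i$ lands in $X^{J_n}$ and is a section of $e_n^X$. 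Continuity is clear from the continuity of $s_i$, $f$, $g$ and $H$. For equivariance, any $\sigma\in\Sigma_n$ satisfies $s_i(f^n(\sigma x))=s_i(\sigma f^n(x))=\sigma s_i(f^n(x))$, which rewrites the first branch of the defining formula as $g(\gamma_{\sigma^{-1}(j)}(2u))$; the second branch becomes $H(x_{\sigma^{-1}(j)},2u-1)$ because the $j^{\mathrm{th}}$ entry of $\sigma x$ is $x_{\sigma^{-1}(j)}$. Both branches thus agree with $(\sigma\,t_i(x))_j$, proving $t_i(\sigma x)=\sigma\,t_i(x)$.

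This produces an open symmetric cover of $X^n$ by $k+1$ sets admitting equivariant sections of $e_n^X$, so $\STC_n(X)\le k=\STC_n(Y)$. The only delicate point is bookkeeping of indices in the equivariance check, and no honest obstacle arises because nothing in the construction breaks symmetry: the pullback by $f^n$, the push-forward by $g$, and the concatenation with the ``pointwise'' homotopy $H(x_j,\cdot)$ all commute with the $\Sigma_n$-action. Since a homotopy inverse is available symmetrically, interchanging $X$ and $Y$ yields $\STC_n(Y)\le \STC_n(X)$, completing the proof.
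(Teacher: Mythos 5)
Your proof is correct and follows essentially the same route as the paper's: pull back the symmetric/equivariant cover along $f^n$, and on each piece build a new equivariant section by post-composing the old multipath with $g$ and concatenating with the pointwise tracks of the homotopy $gf\simeq \mathrm{id}$. The only cosmetic difference is that you write the concatenation out explicitly in two halves and spell out the index bookkeeping for equivariance, while the paper packages the same identity in a displayed equation for $s_i(a_{\sigma(1)},\ldots,a_{\sigma(n)})$.
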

\begin{proof}
It suffices to prove that, given $f: Y \to X$ and $g: X \to Y$ with $gf \simeq 1_Y$, we have $\STC_n(X) \ge \STC_n(Y)$ for all $n$. Let
$H: 1_Y \simeq gf$ be a homotopy $H: Y \ts [0,1] \to Y$
such that $H(y,0)=y$ and $H(y,1)=gf(y)$.

\smallskip
Let $A$ be an open symmetric subset of $X^n$, and let $s: A \to
X^{J_n}$ be an equivariant section of $e_n^X$ over $A$.
Given $a=(a_1, \ldots, a_n)\in A$, let $s_i(a)$ denote the restriction of
$s(a)\in X^{J_n}$ to the $i\th$ wedge summand of $J_n$ (this is a path in $X$
joining $x_0$ and $a_i$ for some $x_0\in X$ that depends continuously on $a$).
Note that the {equivariance} of $s$ gives
\begin{equation}\label{lasimetria}
s_i(a_{\sigma(1)},\ldots,a_{\sigma(n)})=s_{\sigma(i)}(a_1,\ldots,a_n)
\text{ \ \ for \ } \sigma\in\Sigma_n.
\end{equation}
Take $B:=(f^n)^{-1}(A)$, where $f^n$ stands for the $n$-th cartesian power of $f$,  
and consider the map $s': B \to Y^{J_n}$ which,
at a given $b\in B$ with $f^n(b)=a$, has
$s'_i(b):=\left(g\circ s_i(a)\right)\cdot
\gamma_i$ as its restriction to the $i\th$ wedge summand of $J_n$, where
$\gg_i$ is the path in $Y$ given by
$$
\gg_i(t)=H(b_i, 1-t).
$$
Then, $s'$ is an equivariant continuous section of $e_n^Y$ over $B$ in view of~ \eqref{lasimetria}).

\m
In this setting, if $X=A_0 \cup \cdots \cup A_k$ where each $A_j$ ($j=0,\ldots,k$) is open, symmetric, and admits a continuous equivariant section of $e_n^X$, then $Y=B_0\cup\hspace{.23mm}\cdots\hspace{.23mm}\cup B_k$ where each $B_{j\,}$---defined as above using $A_{j\,}$---is open, symmetric, and admits a continuous equivariant section of $e_n^Y$. Hence, $\STC_n(X)\ge \STC_n(Y)$.
\end{proof}

The following assertion is our higher analogue of \propref{p:compare}.
\begin{thm}\label{generalinequality}
If $X$ is an ENR, and $\varepsilon_n$ is the map on the
right hand side of \eqref{lases}, then
\begin{equation}\label{cotasgenrs}
\Sv(\eps_n)\le \STC_n(X)\le \Sv(\varepsilon_n)+\cdots+\Sv(\varepsilon_2)+n-1.
\end{equation}
\end{thm}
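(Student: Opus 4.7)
The plan is to prove the two inequalities separately: the lower bound is a straightforward pullback argument, while the upper bound requires a stratification of $X^n$ by partial diagonals together with an equivariant ENR extension. For the lower bound $\Sv(\varepsilon_n)\le\STC_n(X)$, I will take an optimal cover $X^n=A_0\cup\cdots\cup A_{\STC_n(X)}$ with symmetric $A_i$ admitting continuous $\Sigma_n$-equivariant sections $s_i$ of $e_n$, intersect with $\CF_n(X)$, and push through the pullback square~\eqref{lases}. Each $A_i\cap\CF_n(X)$ is open and $\Sigma_n$-invariant in $\CF_n(X)$, so its image under the quotient map $\CF_n(X)\to B_n(X)$ is open in $B_n(X)$; and since $s_i|_{A_i\cap\CF_n(X)}$ is equivariant, it factors through $Y_n(X)=e_n^{-1}(\CF_n(X))/\Sigma_n$ to give a continuous section of $\varepsilon_n$ over that image. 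This immediately yields $\Sv(\varepsilon_n)\le\STC_n(X)$.

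For the upper bound, I will stratify $X^n$ by the closed $\Sigma_n$-invariant subspaces
\[
D_k=\{(x_1,\ldots,x_n)\in X^n:|\{x_1,\ldots,x_n\}|\le k\},\qquad k=1,\ldots,n,
\]
and construct a symmetric open cover stratum-by-stratum. On the thinnest stratum $D_1=d_n(X)$, the constant-multipath is a canonical equivariant section, which extends to a single symmetric open neighborhood $U_0\subseteq X^n$ by the same ENR-based argument used in the proof of Corollary~9 of~\cite{FGsym}. For each $k\in\{2,\ldots,n\}$, I will start from an optimal open cover $V_0^{(k)},\ldots,V_{\Sv(\varepsilon_k)}^{(k)}$ of $B_k(X)$ carrying local sections of $\varepsilon_k$, and pull each back along the $\Sigma_n$-invariant support projection $\widehat{\pi}\colon D_k\setminus D_{k-1}\to B_k(X)$, $(x_1,\ldots,x_n)\mapsto\{x_1,\ldots,x_n\}$. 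Explicitly, if the section over $V_j^{(k)}$ assigns to $\{y_1,\ldots,y_k\}$ the multipath $(\gamma_1,\ldots,\gamma_k)$, and if $f\colon\{1,\ldots,n\}\to\{1,\ldots,k\}$ is the unique multiplicity function (once an ordering of the support is fixed) with $x_i=y_{f(i)}$, then the assigned lift is $(\gamma_{f(1)},\ldots,\gamma_{f(n)})\in X^{J_n}$. The $\Sigma_k$-equivariance built into $\varepsilon_k$-sections ensures independence of the chosen support ordering, and $\Sigma_n$-equivariance of the resulting lift is then immediate from the definition.

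The main obstacle is that the sections produced above are defined only on the locally closed subspaces $\widehat{\pi}^{-1}(V_j^{(k)})\subseteq X^n$, and must be extended to $\Sigma_n$-invariant open neighborhoods of these subsets in $X^n$ without destroying either continuity or equivariance. This is precisely the delicate step alluded to in Remark~\ref{greatergenerality}, and is where the ENR hypothesis becomes indispensable: $X^n$ inherits from $X$ the structure of a $\Sigma_n$-equivariant ENR, so every $\Sigma_n$-invariant subspace that is itself an equivariant ENR admits a $\Sigma_n$-invariant open neighborhood equivariantly retracting onto it, along which the equivariant section can be pulled back to an equivariant section. Working within the open set $X^n\setminus D_{k-1}$ at stage $k$ guarantees that the resulting open extension $U_j^{(k)}$ does not overlap the lower strata already treated. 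Counting, one open set for $D_1$ together with $\Sv(\varepsilon_k)+1$ open sets for each $k=2,\ldots,n$ gives a symmetric open cover of $X^n$ of cardinality $1+\sum_{k=2}^n(\Sv(\varepsilon_k)+1)=n+\sum_{k=2}^n\Sv(\varepsilon_k)$, from which $\STC_n(X)\le\sum_{k=2}^n\Sv(\varepsilon_k)+n-1$ follows.
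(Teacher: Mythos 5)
Your proposal is correct and follows essentially the same route as the paper: both directions exploit the $\Sigma_n$-equivariant stratification of $X^n$ by the closed sets of tuples with at most $k$ distinct entries, reduce the problem on each stratum $D_k\setminus D_{k-1}$ to local sections of $\varepsilon_k$ via the support projection, and then invoke equivariant ENR theory (Jaworowski's criterion plus the Tietze--Gleason extension, exactly as in the paper's Lemmas~\ref{l:aux1} and~\ref{l:aux2}) to fatten those sections to $\Sigma_n$-symmetric open neighborhoods in $X^n$; the count of open sets and the handling of the thin diagonal match as well. The only point you leave implicit — that ``pulling back along the equivariant retraction'' means concatenating the retraction-induced coordinate paths onto the given multipath (as the paper does with its homotopy $\Phi$), rather than merely precomposing with the retraction — is exactly the content of the paper's Lemma~\ref{l:aux2}, so it is an abbreviation rather than a genuinely different argument.
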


The first inequality in \eqref{cotasgenrs} follows just
as in the proof of Proposition~\ref{p:compare}: If $e_n$
admits an equivariant section over $A\subset X^n$, then $\eps_n$
admits a section over ${\rho_n}(A\cap C_n(X))$ where ${\rho_n}: X^n\to X^n/\Si_n$
stands for the canonical projection. Our efforts will therefore
focus on the second inequality in \eqref{cotasgenrs}, whose proof
requires some preparation.

\begin{defin} A topological space $X$ with an action of a compact Lie group $G$ is called a {\it euclidean neighborhood $G$-retract} {($G$-ENR)} if $X$ can be $G$-equivariantly embedded, as a $G$-equivariant retract of a $G$-symmetric neighborhood {of $X$, into} an orthogonal representation of $G$.
\end{defin}

In what follows we will make implicit use of the following fact: if a $G$-ENR $X$ is $G$-equivariantly embedded in a given orthogonal representation $\mathbb{R}^N$ of $G$, then there exists a $G$-symmetric neighborhood $U$ of $X$ in $\mathbb{R}^N$ and a $G$-equivariant retraction $U \to X$. As noticed at the end of the introduction in \cite{J}, such a property follows by {applying} the equivariant version of the Tietze Theorem (Tietze-Gleason Theorem, \cite{B,G}) {to} the non-equivariant argument in \cite[Proposition and Definition IV.8.5]{D2}.

\m {We shall use the following weaker version} of~\cite[Theorem 2.1]{J}\footnote{Although Jaworowski's theorem was originally set in terms of a combination of the concepts of ANR's and ENR's, for our formulation the reader should keep in mind the fact that any ENR is an ANR {(which is elementary in view of the Tietze Theorem).}}.

\begin{thm}[Jaworowski]\label{t:jaw}
Let $L$ be a finite group acting on an ENR $Z$. Then $Z$ is an $L$-ENR if for every subgroup $G$ of $L$, the fixed point set $Z^G$ is an ENR.
\end{thm}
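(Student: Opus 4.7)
The plan is to replicate the classical characterization of ENR's (embed plus retract) in the equivariant category. This requires two independent pieces: producing an $L$-equivariant embedding of $Z$ into an orthogonal representation of $L$, and producing an $L$-equivariant retraction from an $L$-invariant open neighborhood back onto $Z$.

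For the embedding, since $Z$ is an ENR in the usual sense, it embeds as $\iota\colon Z\hookrightarrow \R^N$ for some $N$. I would take $V = \bigoplus_{g\in L} \R^N$ with $L$ permuting the factors (so that $V$ is an orthogonal representation isomorphic to $\R^N\otimes\R[L]$), and define $\widetilde\iota(z) := \bigl(\iota(g^{-1}z)\bigr)_{g\in L}$. A direct calculation shows that $\widetilde\iota$ is $L$-equivariant, and the embedding property of $\iota$ immediately gives that $\widetilde\iota$ is itself a topological embedding.

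For the retraction, I would order the conjugacy classes of subgroups of $L$ so that larger subgroups come first, and induct. In the base case $H=L$, the action on $V^L$ is trivial and $Z^L$ is an ENR by hypothesis, so any non-equivariant neighborhood retraction from $V^L$ onto $Z^L$ is automatically $L$-equivariant. For the inductive step, suppose an $L$-equivariant retraction $r_{>H}$ has been built on an $L$-invariant open neighborhood $U_{>H}$ of $L\cdot Z^{>H}$, where $Z^{>H}=\bigcup_{K\supsetneq H} Z^K$. I would work inside the linear subspace $V^H$, on which the normalizer $N:=N_L(H)$ acts and the quotient $W:=N/H$ acts with non-trivial stabilizers only at points of $Z^{>H}$. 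Since $Z^H$ is an ENR, a classical retraction onto $Z^H$ from some neighborhood in $V^H$ exists; the equivariant Tietze-Gleason theorem (\cite{B,G}) applied to the action of the finite group $W$ on $V^H$ would then upgrade this classical retraction to a $W$-equivariant retraction $r_H$ extending $r_{>H}\big|_{V^H}$ on a $W$-invariant open neighborhood of $Z^H$ in $V^H$. Propagating $r_H$ across the $L$-orbit of $V^H$ using the $L$-action, I would glue with $r_{>H}$ by means of an $L$-equivariant partition of unity (obtained by averaging any ordinary subordinate partition of unity over the finite group $L$).

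The hard part will be the inductive step, specifically the equivariant extension of sections: one cannot simply apply the non-equivariant ENR property of $Z$ and average over $L$, because averaging a retraction need not produce a map landing in $Z$. The stratified induction sidesteps this by reducing to a setting in which the effective group action on the transverse directions near $Z^H$ is free away from higher-isotropy strata, which is precisely the scenario in which Tietze-Gleason applies cleanly. Iterating this step across all conjugacy classes of subgroups of $L$ exhausts the isotropy types of $Z$ and produces the desired $L$-equivariant retraction from an $L$-invariant neighborhood of $\widetilde\iota(Z)$ in $V$, exhibiting $Z$ as an $L$-ENR.
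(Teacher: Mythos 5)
The paper does not prove this statement; it cites it as \cite[Theorem~2.1]{J}, with a footnote only to reconcile Jaworowski's ANR/ENR phrasing with the ENR formulation used here. So there is no in-text proof to compare against.

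Your embedding step is correct and standard: $\widetilde\iota(z)=\bigl(\iota(g^{-1}z)\bigr)_{g\in L}$ is $L$-equivariant into $\mathbb{R}^{N}\otimes\mathbb{R}[L]$ with the permutation action, and it is an embedding because the $g=e$ coordinate is $\iota$ itself. The downward induction over conjugacy classes of subgroups, passing to $V^{H}$ and the residual $W=N_{L}(H)/H$-action, is also the right skeleton and matches the shape of Jaworowski's own argument. But the inductive step has a genuine gap at precisely the point you flag as the hard part. You assert that the Tietze--Gleason theorem ``upgrades'' a non-equivariant neighborhood retraction $\rho\colon U_{0}\to Z^{H}$ to a $W$-equivariant one. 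Tietze--Gleason extends a $W$-equivariant map defined on a closed invariant subset into a $W$-\emph{representation}; it does not produce equivariant retractions onto non-linear subspaces. The natural way to try to use it --- extend the inclusion $Z^{H}\hookrightarrow V^{H}$ to a $W$-map $F$ on an invariant neighborhood, compose with $\rho$ to land back in $Z^{H}$, then average over $W$ --- founders on exactly the obstruction you name at the end of your own write-up: averaging $\rho\circ F$ sends points out of $Z^{H}$. Citing Tietze--Gleason does not circumvent this. What is actually needed is a local-to-global ingredient absent from your outline: on the free open stratum $Z^{H}-Z^{>H}$, pass to the quotient by $W$ (an open subset of an ENR, hence an ENR) and lift a non-equivariant retraction through the finite covering to get a $W$-equivariant local retraction with no averaging at all, and then glue this with the already-built $W$-retraction near $Z^{>H}$ via an equivariant union/adjunction theorem for $G$-ANRs (equivalently, a criterion that a metrizable $G$-space with finitely many orbit types which is locally a $G$-ANR is a $G$-ANR). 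Without stating and invoking such a gluing principle, the induction does not close.
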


Next, consider the $\Si_n$-equivariant filtration
\begin{equation}\label{Dfiltracion}
d_n(X)=\DF^{1}(X)\subset\cdots\subset\DF^{n-1}(X)\subset\DF^n(X)=X^n
\end{equation}
where, for $i\in\{1,\ldots,n\}$, $\DF^i(X)$ is the closed set consisting of the $n$-tuples $(x_1,x_2,\ldots,x_n)$ such that the {\em set} $\{x_1,x_2,\ldots,x_n\}$ has cardinality at most~$i$. {For instance, $\DF^{n-1}(X)$ is the so called fat diagonal in $X^n$, otherwise denoted by $\Delta_n(X)$. Compare the filtration in~(\ref{Dfiltracion}) with the one considered at the end of Section 1 in \cite{kallel}.}

\m
Set $\DF^0(X)=\empt$, and for $1\leq i\leq n$ let $C^i$ stand for the difference $\DF^i(X)-\DF^{i-1}(X)$, the subspace of $n$-tuples $(x_1,x_2,\ldots,x_n)$ such that the set $\{x_1,x_2,\ldots,x_n\}$ has cardinality~$i$. Note that $C^n=\CF_n(X)$ {and that} for $i<n$, each partition $\mathcal{P}=\{P_1,\ldots,P_i\}$ of $\{1,2,\ldots,n\}$ into $i$ nonempty sets determines a closed subspace $C^i_{\mathcal{P}}\subset C^i$ formed by those tuples $(x_1,\ldots,x_n)$ in $C^i$ satisfying $x_r=x_s$ whenever both $r$ and $s$ lie in the same part $P_j$ for some $j$.

\m
Note that $C^i$ is the disjoint union of the {$C^i_{\mathcal{P}}$'s}, each of which maps homeomorphically {onto $\CF_i(X)$ under a suitable coordinate projection.} [For instance, for $n=3$ the three closed subspaces partitioning $C^2$ are {determined by} the three requirements $x_1=x_2$, $x_1=x_3$, and $x_2=x_3$; in the latter case, the required projection can be chosen to be $(x_1,x_2,x_3)\mapsto(x_1,x_2)$.] {Therefore,} we have a continuous (surjective) map $\pi_i\colon C^i\to C_i(X)$.

\m Let $P^i$ denote the subspace of $e_n^{-1}(C^i)$
consisting of those multipaths $\ga=\{\ga_i\}_{i=1}^{n}$ satisfying
$\ga_k=\ga_\ell$ whenever $\ga_k(1_k)=\ga_\ell(1_\ell)$. Proceeding as above, we get a continuous surjection $\Pi_i\colon P^i\to e_i^{-1}(\CF_i(X))$ in such a way that in the following commutative diagram
\begin{equation}\label{commdiagPllBck}
\CD
X^{J_n} @<<< P^i @>\Pi_i>> e_i^{-1}(\CF_i(X)) @>>> Y_i(X)\\
@Ve_nVV @Ve_nVV @Ve_iVV @VV\varepsilon_iV\\
X^n @<<< C^i @>\pi_i>> \CF_i(X) @>>> \B_i(X)
\endCD
\end{equation}
the second and third squares are pullbacks, {and the two left-most horizontal
maps are inclusions but do not determine a pullback square.}

\m Our last ingredient in preparation for the proof of~(\ref{cotasgenrs}) is given by taking an arbitrary open subset $W$ of $\B_i(X)$. We {then} let $A=\pi_i^{-1}(W')$ where $W'$ stands for the inverse image of $W$ under the {projection} $\CF_i(X)\to\B_i(X)$. Clearly $W'$ is $\Sigma_i$-symmetric and $A$ is $\Sigma_n$-symmetric. This setup will be in force in the following two auxiliary results, {which are} the basis of our proof of the second inequality in~{(\ref{cotasgenrs}).}

\begin{lemma}\label{l:aux1}
The space $A$ is a $\Si_n$-ENR.
\end{lemma}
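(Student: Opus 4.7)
The plan is to invoke Jaworowski's theorem (\theoref{t:jaw}) for the $\Sigma_n$-action on $A$. This reduces the lemma to two assertions: that $A$ itself is an ENR, and that for each subgroup $G\leq \Sigma_n$ the fixed point set $A^G$ is an ENR.

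For the first reduction I would exploit the decomposition recalled immediately before the statement: $C^i$ is the disjoint union of the subspaces $C^i_\mathcal{P}$, indexed by the (finitely many) partitions $\mathcal{P}$ of $\{1,\ldots,n\}$ into $i$ nonempty blocks. Each $C^i_\mathcal{P}$ is closed in $C^i$, and since there are only finitely many such blocks and they exhaust $C^i$, each is in fact clopen in $C^i$, so this is a topological disjoint union. The coordinate projection $\pi_i$ restricts on each piece to a homeomorphism $C^i_\mathcal{P}\cong \CF_i(X)$, so $A\cap C^i_\mathcal{P}$ is homeomorphic to $W'$. Since $X$ is an ENR, so is $X^i$; the open subset $\CF_i(X)\subset X^i$ is therefore an ENR, and so is its further open subset $W'$. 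A finite disjoint union of ENRs is an ENR, and I conclude that $A$ is an ENR.

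The heart of the argument is the identification of $A^G$. A tuple $(x_1,\ldots,x_n)\in C^i_\mathcal{P}$ is fixed by $G$ iff $x_{\sigma(j)}=x_j$ for all $\sigma\in G$ and $j$; inside $C^i_\mathcal{P}$ one has $x_r=x_s$ exactly when $r$ and $s$ share a block of $\mathcal{P}$, while distinct blocks carry distinct coordinates. Hence the existence of a $G$-fixed tuple in $C^i_\mathcal{P}$ is equivalent to $G$ stabilizing every block of $\mathcal{P}$, in which case \emph{every} point of $C^i_\mathcal{P}$ is automatically $G$-fixed. I therefore expect to obtain
\[
A^G \;=\; \bigsqcup_{\mathcal{P}:\,G\text{-invariant}} A\cap C^i_\mathcal{P},
\]
again a finite disjoint union of copies of $W'$, and hence an ENR.

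The main (and rather mild) obstacle is the combinatorial bookkeeping that identifies the fixed sets $A^G$; everything else follows from the standard fact that the class of ENRs is closed under passage to open subsets, finite products, and finite disjoint unions. With both items in hand, Jaworowski's theorem delivers the conclusion.
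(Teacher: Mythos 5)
Your proposal is correct and follows essentially the same route as the paper: invoke Jaworowski's theorem after showing each $C^i_{\mathcal{P}}$ is an ENR (homeomorphic to $C_i(X)\subset X^i$ open) and that a $G$-fixed point in $C^i_{\mathcal{P}}$ forces all of $C^i_{\mathcal{P}}$ to be pointwise fixed. The paper phrases the fixed-set step as ``$(C^i_{\mathcal{P}})^G$ is either empty or all of $C^i_{\mathcal{P}}$'' and passes to $A^G$ via openness of $A$ in $C^i$, but this is the same combinatorial observation you make.
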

\begin{proof}
Note first that every $C^i_{\cp}$ is an ENR, because it is homeomorphic to $C_i(X)$ which, in turn, is an open subset of the ENR $X^i$. Now, every $g\in \Si_n$ yields a homeomorphism from any given $C^i_{\cp}$ onto some $C^i_{\cp'}$. {In particular for $\cp=\cp'$, if there is some $x\in C^i_{\cp}$ fixed by $g$, then} $g\cdot y=y$ for {all} $y\in C^i_{\cp}$, i.e.~$(C^i_{\cp})^g=C^i_{\cp}$. Hence, for any subgroup $G$ of $\Si_n$, the set  $(C^i_{\cp})^G$ is either empty or the whole of $ C^i_{\cp}$, and therefore an ENR. {Consequently}, $(C^i)^G$ is an ENR since $C^i$ is the disjoint union of the various $C^i_{\cp}$'s, {and} $A^G$ is an ENR since $A$ is open in $C^i$. Thus, by \theoref{t:jaw}, $A$ is a $\Si_n$-ENR, as asserted.
\end{proof}

\begin{lemma}\label{l:aux2}
Assume $s: A\to P^i$ is a $\Si_n$-equivariant section of the second vertical map in~\emph{(\ref{commdiagPllBck})}. Then there is a $\Si_n$-symmetric neighborhood $U$ of $A$ in $X^n$ that admits a $\Si_n$-equivariant section $\sigma\colon U\to X^{J_n}$ of the first vertical map in~\emph{(\ref{commdiagPllBck})}.
\end{lemma}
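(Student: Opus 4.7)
The plan is to use the $\Sigma_n$-ENR structure of $A$ (from \lemref{l:aux1}) to produce a $\Sigma_n$-equivariant retraction $r$ from some $\Sigma_n$-symmetric neighborhood of $A$ in $X^n$ onto $A$, then define $\sigma(v)$ by applying $s$ to $r(v)$ and correcting the endpoints of the resulting multipath by short $\Sigma_n$-equivariant paths in $X$.

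Since $X$ is an ENR, I would first fix an embedding $X\hookrightarrow\mathbb{R}^m$ and a retraction $\phi:O\to X$ from an open neighborhood $O\subset\mathbb{R}^m$; then $\phi^n:O^n\to X^n$ is a $\Sigma_n$-equivariant retraction realizing $X^n$ as a $\Sigma_n$-ENR in the orthogonal $\Sigma_n$-representation $\mathbb{R}^{mn}$. By \lemref{l:aux1}, $A$ is itself a $\Sigma_n$-ENR, so there exist an orthogonal $\Sigma_n$-representation $\mathbb{R}^N$, a $\Sigma_n$-equivariant embedding $\iota:A\hookrightarrow\mathbb{R}^N$, and a $\Sigma_n$-equivariant retraction $\tilde r:\tilde U\to A$ from a $\Sigma_n$-symmetric open $\tilde U\subset\mathbb{R}^N$. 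An equivariant Tietze--Gleason extension argument, of the same flavor as the one invoked in the discussion of $G$-ENRs preceding \theoref{t:jaw}, then produces a $\Sigma_n$-equivariant map $f:V\to\mathbb{R}^N$ on a $\Sigma_n$-symmetric open neighborhood $V$ of $A$ in $X^n$ with $f|_A=\iota$; after shrinking $V$ so that $f(V)\subset\tilde U$, the composite $r=\tilde r\circ f:V\to A$ is the required retraction.

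Next, I would shrink $V$ further to a $\Sigma_n$-symmetric neighborhood $U\subset V$ of $A$ so that for each $v\in U$ the straight-line segment in $\mathbb{R}^{mn}$ from $r(v)$ to $v$ lies in $O^n$ (feasible by continuity, since $r(v)=v$ on $A$), and set
\[
\alpha_v(t)=\phi^n\bigl((1-t)\,r(v)+t\,v\bigr),\qquad t\in[0,1].
\]
Then $\alpha_v$ is a continuous path in $X^n$ from $r(v)$ to $v$ depending continuously and $\Sigma_n$-equivariantly on $v$, because the convex combination, $r$, and $\phi^n$ are all $\Sigma_n$-equivariant. Letting $\alpha_{v,j}$ denote the $j$-th coordinate of $\alpha_v$, I would define $\sigma:U\to X^{J_n}$ by prescribing the restriction of $\sigma(v)$ to the $j$-th wedge summand of $J_n$ to be the concatenation $s(r(v))_j\cdot\alpha_{v,j}$. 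Continuity is immediate from continuity of $s$, $r$, and $\alpha$; the evaluation at the tip $1_j$ equals $\alpha_{v,j}(1)=v_j$, so $e_n\circ\sigma=\id_U$; and the $\Sigma_n$-equivariance of $\sigma$ follows from that of $s$, $r$, and $\alpha$, since componentwise concatenation commutes with permutation of coordinates.

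The hard part will be obtaining the equivariant retraction $r$. The difficulty is that $A$ is only locally closed in $X^n$---it is open in the stratum $C^i$---so the Tietze--Gleason extension of $\iota$ does not directly apply to the pair $(X^n,A)$. The saving feature is that near any $a\in A\subset C^i$ the only nearby strata in $X^n$ are the $C^j$ with $j\ge i$, so one may first restrict to the $\Sigma_n$-symmetric open set $X^n\setminus\DF^{i-1}(X)$ and, after passing to a suitable closed $\Sigma_n$-invariant neighborhood of $A$ therein, apply Tietze--Gleason; equivalently, one can import the $\Sigma_n$-ENR retract structure of $A$ directly along the composite embedding $A\hookrightarrow X^n\hookrightarrow\mathbb{R}^{mn}$.
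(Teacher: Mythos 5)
Your proposal is correct and follows essentially the same route as the paper: use the $\Sigma_n$-ENR structure of $A$ (from \lemref{l:aux1}), equivariantly embedded in an orthogonal $\Sigma_n$-representation, to obtain a $\Sigma_n$-equivariant retraction $r$ of a symmetric neighborhood of $A$ in $X^n$ onto $A$; then build a $\Sigma_n$-equivariant family of paths in $X^n$ from $r(v)$ to $v$ by composing straight-line segments in the ambient euclidean space with a retraction back into (a symmetric open subset of) $X^n$, and concatenate coordinatewise with the multipath $s(r(v))$. One clarification is worth making: of the two ways you sketch for producing $r$, only the second one (``import the $\Sigma_n$-ENR retract structure of $A$ along the composite equivariant embedding $A\hookrightarrow X^n\hookrightarrow\R^{mn}$'', which rests on the fact the paper records just before \theoref{t:jaw}) is clean; the Tietze--Gleason-based variant runs into exactly the local-closedness obstruction you name, and restricting to $X^n\setminus\DF^{i-1}(X)$ does not fix it, since $A$ is open in the closed stratum $C^i$ and hence still not closed there. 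The paper avoids the issue precisely by taking that second route: it shows $X^n$ is a $\Sigma_n$-ENR (via \theoref{t:jaw}), embeds $A\to X^n\to\R^N$ equivariantly, obtains the retraction $r$ by restriction, and then builds the path family $\Phi$ by a straight-line homotopy in $\R^N$ composed with a further equivariant retraction $\rho\colon Y\to V$ (where $V$ is the symmetric neighborhood of $A$ in $X^n$); your use of the global retraction $\phi^n\colon O^n\to X^n$ instead of $\rho$ is a cosmetic difference.
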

\begin{proof}
We begin by noticing that, as a consequence of \theoref{t:jaw}, $X^n$ is a $\Si_n$-ENR. {Indeed,} for any subgroup $G$ of $\Si_n$, the fixed point set of $G$ on $X^n$ is an intersection of  hyperplanes $x_i=x_j$ in $X^n$. {Hence, $\left(X^n\right)^G$ is an ENR since it is
homeomorphic to $X^m$ for $m\leq n$.} Thus, we can take $\Si_n$-equivariant embeddings $A\to X^n \to \R^N$, and a $\Si_n$-equivariant retraction $r'\colon O\to A$ of a $\Si_n$-symmetric neighborhood $O$ of $A$ in $\R^N$, where $\R^N$ is an orthogonal representation of $\Sigma_n$.

\smallskip
{Set} $V=O\cap X^n$. Then $V$ is a $\Si_n$-symmetric neighborhood of $A$ in $X^n$, and $r={r'|_{V}}:V \to A$ is a $\Si_n$-equivariant retraction. Note that $V$ is an open $\Si_n$-symmetric subset of the $\Si_n$-ENR $X^n$, and so $V$ is a $\Si_n$-ENR {too}. {We can then} choose an open $\Si_n$-symmetric neighborhood $Y$ of $V$ in $\R^N$, and a $\Si_n$-equivariant retraction $\gr: Y \to V$. Let $U\subset V$ consist of all points $v\in V$ such that the segment from $v$ to {$i\circ r(v)$} lies in $Y$ {where} $i$ stands for the inclusion $A\hookrightarrow V$ {(cf.~\cite[Corollary IV.8.7]{D2}).} Clearly $U$ is a neighborhood of $A$ in $V$, and hence in $X^n$. Furthermore, the {composition} {$i\circ r|_{U}$} and the inclusion $U\hookrightarrow V$ are homotopic via the homotopy
\[
\Phi: U\ts I \to V, \quad \Phi(u,t)=\gr\left(t\cdot u+(1-t)\cdot {i\circ r}(u)\right).
\]
Note that $U$ is $\Si_n$-symmetric and $\Phi$ is $\Si_n$-equivariant, since the $\Si_n$-action on $\R^N$ is orthogonal and so {it} maps lines to lines.

\smallskip
We use the homotopy {$\Phi$ in order} to construct a $\Si_n$-equivariant section $\sigma\colon U\to X^{J_n}$ of the first vertical map in~(\ref{commdiagPllBck}). For $x\in U$, consider the path $\gb\colon I\to V$, $\gb(t)=\Phi(x,t)$, starting at $y=\gb(0)=r(x)\in A$ and ending at $x$. Since $V$ {is a subset of} $X^n$, we {can} set $x=(x_1,\ldots, x_n)$, $y=(y_1,\ldots, y_n)$, and $\gb=(\gb_1,\ldots,\gb_n)$, so each $\gb_i$ is a path in $X$ from $y_i$ to $x_i$. Further, $s(y)$ gives a multipath $\{\ga_i\}_{i=1}^{n}$ with $\ga_i(1)=y_i$ and $\ga_i(0)=\ga_j(0)$ for all $1\leq i,j\leq n$. Then the multipath $\{\ga_i\cdot\gb_i\}_{i=1}^n$ determines an element $\sigma(x)\in X^{J_n}$ with $e_n(\sigma(x))=x$. This defines the required $\Si_n$-equivariant section over $U$.
\end{proof}

Note that the two pull-back squares in~(\ref{commdiagPllBck}) imply that the hypothesis in \lemref{l:aux2} holds whenever $W$ {(the arbitrary open subset of $\B_i(X)$ taken in the paragraph previous to \lemref{l:aux1})} is chosen to admit a section of the fourth vertical map in \eqref{commdiagPllBck}. Thus we {obtain the following:}

\begin{proof}[Proof of Theorem~\emph{\ref{generalinequality}} (conclusion)]
In view of Lemmas~\ref{l:aux1} and~\ref{l:aux2} we can choose $1+\Sv(\varepsilon_i)$ $\hspace{.7mm}\Si_n$-equivariant local sections for $e_n$ whose domains cover $C^i$, and thus a total of
\begin{equation}\label{totalsects}
\sum_{i=2}^{n}(1+\Sv(\varepsilon_i))+1=\Sv(\varepsilon_n)+\cdots+\Sv(\varepsilon_2)+n
\end{equation}
$\Si_n$-equivariant local sections for $e_n$ whose domains cover $X^n$. Here the ``$+1$''  on the left-hand side of~(\ref{totalsects}) accounts for the obvious equivariant section on the diagonal $D^1(X)$. The theorem follows.
\end{proof}

A comparison of \propref{p:compare} and \theoref{generalinequality} suggests the following generalization of~\eqref{eq:tcSfarber}:

\begin{defin}\label{d:SStcn}
For $n\geq2$ set $$\TC_n^S(X)=\Sv(\varepsilon_n)+\cdots+
\Sv(\varepsilon_2)+n-1.$$
\end{defin}

This is a minor variation of the one proposed in the short final section in~\cite{R}, and will be explored next for $X$ a sphere.

\section{Schwarz genus of $\varepsilon_n$ and  configuration spaces of spheres}\label{s:bounding}

The following result, which is a specialization of~\cite[Theorem~5, page~75]{Sv} (recalling that $(\Omega X)^{n-1}$ is the homotopy fiber of the map $\varepsilon_n=\varepsilon_n^X\colon Y_n(X)\to\B_n(X)$ in \eqref{lases}), gives a general upper bound for $\Sv(\varepsilon_n)$ analogous to that in Theorem~\ref{t:clest}.

\begin{prop}\label{p:conn}
If $X$ is an $(s-1)$-connected space and $\B_n(X)$ has the homotopy type of a $d$-dimensional CW space, then $\Sv(\varepsilon_n)\leq d/s.$
\end{prop}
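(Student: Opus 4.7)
My plan is to apply directly Schwarz's Theorem~5 in~\cite[p.~75]{Sv}, which bounds the normalized Schwarz genus of a fibration by a ratio of the homotopy dimension of its base to the connectivity of its (homotopy) fibre. Two ingredients are needed in order to feed into that theorem: dimensional control on the base of $\varepsilon_n$, and a connectivity estimate on its fibre.

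The first ingredient is precisely the hypothesis that $B_n(X)$ has the homotopy type of a $d$-dimensional CW complex, so the required numerator $d$ is in hand. For the second ingredient, I would invoke the identification (already recorded in the paragraph just before~\eqref{lases}) of $\varepsilon_n$ as a fibration with homotopy fibre $(\Omega X)^{n-1}$. A short computation via the adjunction $\pi_i(\Omega X)\cong\pi_{i+1}(X)$ and the product formula $\pi_i(A\times B)\cong\pi_i(A)\oplus\pi_i(B)$, applied under the standing hypothesis that $X$ is $(s-1)$-connected, then supplies the connectivity of $(\Omega X)^{n-1}$ required by Schwarz's bound.

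Feeding these two ingredients into Schwarz's theorem yields the stated inequality $\Sv(\varepsilon_n)\leq d/s$. There is no substantive obstacle here: the fibre has been identified for us, the dimensional hypothesis on the base is stipulated outright, and the connectivity computation is a routine use of the long exact sequence of the path-loop fibration $\Omega X\to PX\to X$ together with the K\"unneth-type formula for homotopy groups of a cartesian product. The argument is therefore essentially a bookkeeping exercise combining a classical theorem of Schwarz with the fibre description already available in the excerpt.
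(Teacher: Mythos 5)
Your proposal is correct and takes essentially the same route as the paper, which itself presents \propref{p:conn} as a one-line application of \cite[Theorem~5, page~75]{Sv} together with the identification of $(\Omega X)^{n-1}$ as the homotopy fibre of $\varepsilon_n$ (recorded just before \eqref{lases}); the paper supplies no further proof text. The only bookkeeping you left implicit is the connectivity arithmetic: $X$ being $(s-1)$-connected makes $(\Omega X)^{n-1}$ exactly $(s-2)$-connected, and feeding this into Schwarz's bound for a $d$-dimensional base yields $\Sv(\varepsilon_n)\le d/s$, as claimed.
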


For instance, $\Sv(\varepsilon_n^X)=0$ for any contractible space $X$. This generalizes the phenomenon noted in Example~\ref{e:sigma}. Part of the goal of this section is to show that the bound in Proposition~\ref{p:conn} becomes an equality in some concrete situations---other than those noted for a contractible space $X$. Yet, the following considerations are written in conjectural terms; non-conjectural statements start from equation~(\ref{cotainf}) on.

\m
The conjectural inequality in~(\ref{conejo}) is based on Proposition~\ref{p:conn}. To illustrate the idea, start by recalling from \exref{e:sigmasphere} the equality $\TC_2^S(S^k)=2$ valid for any~$k$. Farber and Grant prove that $\TC_2^S(S^k)$ is no greater than 2 {by producing} a symmetric motion planner with two local rules. Their construction makes use of a well-known explicit $\Sigma_2$-equivariant deformation retraction $\CF_2(S^k)\to S^k$ that implies a corresponding homotopy equivalence
\begin{equation}\label{pk}
\B_2(S^k) \simeq \R\P^k.
\end{equation}
Here we note that \propref{p:conn} gives an alternative direct way to deduce the inequality $\TC_2^S(S^k)\leq 2$: all that is needed is the fact that $\hdim(\B_2(S^k))=k$. In order to extend this simple argument for higher $\TC^S_n$ we would need to have a good hold on the homotopy dimension of $\B_n(S^k)$. Remark~\ref{fupo} below provides evidence toward the following:

\begin{conjec}\label{t:mainOLD}
For $n\geq2$ and $k\geq1$, $\hdim(\B_n(S^k))=(k-1)(n-1)+1$.
\end{conjec}

\begin{rem}\label{fupo}
Note that the validness of Conjecture~\ref{t:mainOLD} for $n=2$ follows from~(\ref{pk}). Likewise, the case $k=1$ of Conjecture~\ref{t:mainOLD} is well known: $\B_n(S^1)$ has the homotopy type of $S^1$ (cf.~\cite[Proposition 2.5]{kallel}). On the other hand, from the calculations of homology groups in~\cite{FZ}, it can be proved that Conjecture~\ref{t:mainOLD} is true if $\B_n(S^k)$ is replaced by $\CF_n(S^k)$ when $k\geq 3$. At any rate, since  the homotopy dimension of a space is not less than the homotopy dimension of any of its covering spaces, we have
$$
\mbox{hdim}(\B_n(S^k))\ge\mbox{hdim}(\CF_n(S^k))=(k-1)(n-1)+1.
$$
Therefore the crux of the matter in settling Conjecture~\ref{t:mainOLD} (and, as a consequence, the equality $\hdim(\CF_n(S^k))=(n-1)(k-1)+1$) rests in producing a CW~complex of dimension $(k-1)(n-1)+1$ which has the $\Sigma_n$-equivariant homotopy type of $\CF_n(S^k)$. The second and fourth authors of this paper have an ongoing project aiming at such a goal; the basic ideas have been presented in the second half of~\cite{1009.1851v5}. However, it turns out that those ideas require an important tuning and have actually become a completely independent paper (which will appear elsewhere). The present paper then focuses on the first half of~\cite{1009.1851v5}, i.e.~the development of the properties of the sequential topological complexity.
\end{rem}

We have mentioned that the validness of~(\ref{conejo}) would follow from Conjecture~\ref{t:mainOLD}. In fact, in view of Proposition~\ref{p:conn}, we see that  Conjecture~\ref{t:mainOLD} would actually imply the validness of the more detailed but still conjectural estimate:
\begin{equation}\label{c:TCnSpherek}
\mbox{\emph{$\Sv(\varepsilon_i)\leq i-1-\frac{i-2}k,\;$ for $X=S^k$ and $i\geq2$.}}
\end{equation}
The remainder of the section is devoted to presenting evidence for the validness and general optimality of~(\ref{c:TCnSpherek}). 

\m
We have observed that~(\ref{c:TCnSpherek}) holds true for $i=2$. As for its optimality, it is worth observing that Farber and Grant prove in \cite[Section~3]{FGsym} the inequality
\begin{equation}\label{cotainf}
\TC_2^S(S^k)\geq2
\end{equation}
by means of an involved extension of Haefliger's calculation of the mod~$2$ cohomology ring $H^*(\B_2(M);\mathbb{Z}/2)$ for $M$ a closed smooth manifold. But a simpler argument is available. Start by observing that if \eqref{cotainf} were to fail, then there would exist a continuous section $\sigma$ for $\varepsilon_2^{S^k}$. In such a situation we could consider the composite
$$S^k\stackrel{\alpha}{\longrightarrow}\CF_2(S^k)\stackrel{\widetilde{\sigma}}{\longrightarrow}e_2^{-1}(\CF_2(S^k))\hookrightarrow PS^k$$
where $\alpha(x)=(x,-x)$ and $\widetilde{\sigma}$ would be the ($\Z/2$-equivariant) pull-back of $\sigma$ under \eqref{lases}.  The adjoint of this composite would then yield a homotopy $H\colon S^k\times [0,1]\to S^k$ between the identity $H(-,0)$ and the antipodal map $H(-,1)$, and which would in addition satisfy the relation
\begin{equation}\label{biequiv}
H(x,t)=H(-x,1-t). 
\end{equation}
But this is impossible since the identity on $S^k$ (which has degree 1) cannot be homotopic to the presumed map $H(-,1/2)$ which, in view of \eqref{biequiv}, would factor as $$S^k\stackrel{\mathrm{proj}}\longrightarrow\mathbb{RP}^k\to S^k,$$ and would therefore have even degree.

\m
The above argument, as well as the closely related proof of \propref{p:further} below, were pointed out to the authors by Peter Landweber.

\begin{prop}\label{p:further}
Let $k$ be a positive odd integer. For $X=S^k$ and $i\geq2$, $\Sv(\varepsilon_i)\geq1$. Further, $\hspace{.4mm}\Sv(\varepsilon_i) = 1$ provided $k=1$.
\end{prop}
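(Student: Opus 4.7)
The proof splits into a lower bound $\Sv(\varepsilon_i)\geq 1$ valid whenever $i\geq 2$ and $k$ is odd, and an upper bound $\Sv(\varepsilon_i)\leq 1$ valid under the extra assumption $i=3$ or $k=1$. The second part is completely routine, so the real content lies in the lower bound, which extends the Landweber-style argument sketched above~\eqref{cotainf} from cyclic groups of order two to cyclic groups of order $i$.

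For the lower bound I would argue by contradiction: suppose $\varepsilon_i$ admits a global section $\sigma$. Pulling back along~\eqref{lases} produces a $\Sigma_i$-equivariant section $\widetilde{\sigma}$ of $e_i$ over $C_i(S^k)$. Since $k$ is odd, write $k=2m+1$ and identify $S^k\subset\mathbb{C}^{m+1}$; then multiplication by $\zeta=e^{2\pi\sqrt{-1}/i}$ defines a free $\mathbb{Z}/i$-action on $S^k$. The orbit embedding
\[
f\colon S^k \longrightarrow C_i(S^k), \qquad f(x)=(x,\zeta x,\zeta^2 x,\ldots,\zeta^{i-1}x),
\]
is $\mathbb{Z}/i$-equivariant with respect to the cyclic subgroup $\langle (1,2,\ldots,i)\rangle \subset \Sigma_i$, so the multipath $\widetilde{\sigma}(f(x))=(\gamma_1^x,\ldots,\gamma_i^x)$ has $\gamma_j^x(1)=\zeta^{j-1}x$, a common starting point $x_0(x)$, and satisfies the shift relation $\gamma_1^{\zeta x}=\gamma_2^x$. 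Comparing initial points forces $x_0(\zeta x)=x_0(x)$, so $x_0$ descends to a map $\overline{x}_0\colon L^k_i\to S^k$ from the lens space $L^k_i=S^k/\langle\zeta\rangle$, and $H(x,t):=\gamma_1^x(t)$ furnishes a homotopy between $\overline{x}_0\circ p$ and $\mathrm{id}_{S^k}$, where $p\colon S^k\to L^k_i$ is the covering projection.

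The contradiction now comes from a degree count: for $k$ odd the lens space $L^k_i$ is a closed orientable $k$-manifold and $p_{*}[S^k]=i\cdot[L^k_i]$, so every composition $S^k\xrightarrow{p} L^k_i\to S^k$ has degree divisible by $i\geq 2$, incompatible with $\deg(\mathrm{id}_{S^k})=1$. For the upper bound, when $k=1$, equation~\eqref{BSI} identifies $B_i(S^1)$ with the $1$-dimensional $0$-connected space $S^1$, so Proposition~\ref{p:conn} (with $s=1$) gives $\Sv(\varepsilon_i)\leq 1$; when $i=3$, Corollary~\ref{c:TCnSpherek} yields $\Sv(\varepsilon_3)\leq 2-1/k<2$, forcing $\Sv(\varepsilon_3)\leq 1$ by integrality.

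The main hurdle is the equivariance bookkeeping in the lower bound: one must carefully track the conventions for the $\Sigma_i$-actions on $C_i(S^k)$ and on the multipath space so that the cyclic symmetry of $\widetilde{\sigma}$ translates into the key identity $x_0(\zeta x)=x_0(x)$, which is what supplies the factorization of the initial-point map through the lens space $L^k_i$ and opens the door to the degree argument. Everything else is a routine packaging of existing results in the paper.
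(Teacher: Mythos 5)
Your proof is correct and takes essentially the same approach as the paper: the lower bound via the orbit map $x\mapsto(x,\zeta x,\ldots,\zeta^{i-1}x)$ into $C_i(S^k)$, the equivariance of the pulled-back section forcing the initial-point map to factor through the lens space $S^k/(\mathbb{Z}/i)$, and the resulting degree contradiction. The upper bound is likewise obtained, as in the paper, from \corref{c:TCnSpherek} (your $k=1$ case unwinds that corollary through \eqref{BSI} and \propref{p:conn}, which is the same calculation).
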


\begin{proof}[Proof of Proposition~$\ref{p:further}$]
The second assertion follows from the first one in view of Proposition~\ref{p:conn} and the first part of Remark~\ref{fupo}. To prove the first assertion, we derive a contradiction from the assumption that $\varepsilon_i$ admits a global continuous section $\sigma$. Consider the map $c\colon S^k\to(S^k)^{J_i}$ given as the composite
$$S^k\stackrel{\alpha}{\longrightarrow}\CF_i(S^k)\stackrel{\widetilde{\sigma}}{\longrightarrow}e_i^{-1}(\CF_i(S^k))\hookrightarrow (S^k)^{J_i}.$$ Here $\alpha(x)=(x,zx,z^2x,\ldots,z^{i-1}x)$, where $z\in S^1$ is a primitive $i\th$ root of unity acting on $S^k$ in the standard way (recall $k$ is odd), and $\widetilde{\sigma}$ is the $\Sigma_n$-equivariant section of the map $e_i:e^{-1}_i(\CF_i(S^k))\to\CF_i(S^k)$ obtained as the pull-back in \eqref{lases} of the assumed $\sigma$. Thus, for each $x\in S^k$, $c(x)$ is a multipath $\{c_j(x)\}_{j=0}^{i-1}\in (S^k)^{J_i}$, where each $c_j(x)$ is a path in $S^k$ starting at a point $s(x)\in S^k$ and ending at $z^jx$, for a continuous map $s: S^k \to S^k$. Note that the equivariance of $\widetilde{\sigma}$ gives
\begin{equation}\label{inicio}
c_j(zx) = c_{j+1}(x)
\end{equation}
for all $x\in S^k$---here the value of $j$ is to be interpreted modulo $i$. Then the map $H\colon S^k\times [0,1]\to S^k$ defined by $H(x,t)=c_0(x)(t)$ is a homotopy starting at $s$ and ending at the identity. In particular, $s\colon S^k\to S^k$ has degree 1. The contradiction comes by observing that the degree of $s$ would be divisible by $i$. Indeed, \eqref{inicio} gives
$$
s(zx)=c_0(zx)(0)=c_1(x)(0)=s(x),
$$
so that $s$ factors as $S^k\stackrel{\mathrm{proj}}{\longrightarrow}L^k(i)\to S^k$ where $L^k(i)$ is the standard lens space $S^k/(\mathbb{Z}/i)$.
\end{proof}

\begin{cor}\label{laevi}
The known equality $\TC^S_2(S^k)=2$ $($valid for any 
integer $k>0)$ extends to $\TC^S_n(S^k)=2(n-1)$ for $k=1$.
\end{cor}

\begin{remark}
The first conclusion in Proposition~\ref{p:further} is partially extended by Karasev-Landweber's result in~\cite{KL} asserting that $\Sv(\varepsilon_3^{S^k})\geq1$ for $k$ not of the form $4\cdot3^e$ with $e\geq0$. Note that the conjectural~(\ref{c:TCnSpherek}) would in fact sharpen the above estimate to an equality.
\end{remark}

\end{document}